\documentclass[12pt, reqno]{amsart}
\usepackage{amsmath}
\usepackage{amscd}
\usepackage{bbm}
\usepackage{dsfont}
\usepackage{enumerate}
\usepackage{latexsym}
\usepackage{srcltx}
\usepackage{amsfonts}
\usepackage{amssymb}
\usepackage{datetime}
\usepackage{setspace}
\usepackage{enumerate}
\usepackage{amsthm}
\usepackage{graphicx}
\usepackage{epstopdf}

\newtheorem{theorem}{Theorem}[section]
\newtheorem{proposition}[theorem]{Proposition}
\newtheorem{lemma}[theorem]{Lemma}
\newtheorem{corollary}[theorem]{Corollary}

\theoremstyle{definition}
\newtheorem{example}[theorem]{Example}

\newtheorem{remark} [theorem] {Remark}

\newtheorem{problem} [theorem] {Problem}

\begin{document}
\title{Spectrum of weighted composition operators. Part XII. Kamowitz - Scheinberg theorem revisited}

\author{Arkady Kitover}

\address{Community College of Philadelphia, 1700 Spring Garden St., Philadelphia, PA, USA}

\email{akitover@ccp.edu}

\author{Mehmet Orhon}

\address{University of New Hampshire, 105 Main Street
Durham, NH 03824}

\email{mo@unh.edu}

\subjclass{Primary 47B33; Secondary 46J10, 46E25}

\date{\today}

\keywords{Commutative Banach algebras, automorphisms, weighted composition operators.}

\begin{abstract}
 The  well-known Kamowitz - Scheinberg theorem states that if $U$ is an automorphism of a commutative semi-simple Banach algebra and $U^n \neq I, n \in \mathds{N}$, then the spectrum of $U$ contains the unit circle. In this paper we present some results about  the spectrum of weighted automorphisms of           unital commutative semi-simple Banach algebras that considerably strengthen the statement of the Kamowitz - Scheinberg theorem.  
\end{abstract}

\maketitle

\markboth{Arkady Kitover and Mehmet Orhon}{Kamowitz - Scheinberg theorem revisited.}

\section{introduction} Let $A$ be a commutative semi-simple algebra \footnote{All linear spaces are considered over the field of complex numbers $\mathds{C}$.} and $U$ be an automorphism of $A$. Let $\sigma(U)$ be the spectrum of $U$. The Kamowitz - Scheinberg theorem (see~\cite[Theorem 3]{KS}) states the following

\begin{theorem} \label{t1}
  Either $U^N = I$ for some integer $N$ in which case $\sigma(U)$ consists of a finite union of finite subgroups of the circle, or else $\sigma(U)$ contains the entire unit circle $\mathds{T}$. 
\end{theorem}

After the appearance of the paper~\cite{KS} alternative proofs of Kamowitz - Scheinberg theorem appeared in~\cite{Jo} and~\cite{Hu}.  

Unless stated otherwise, $A$ will mean a unital commutative semi-simple Banach algebra, $\mathfrak{M}_A$ and $\partial A$ denote the space of maximal ideals and the Shilov boundary of $A$, respectively.

Let $U$ be an automorphism of $A$. Let $m \in \mathfrak{M}_A$ and let $F_m$ be the corresponding multiplicative functional on $A$. The map $F_m \rightarrow U^\prime F_m$, where $U^\prime$ is the Banach adjoint of $U$ defines the unique homeomorphism $\varphi$ of $\mathfrak{M}_A$ onto itself, and it is immediate to see that $\varphi(\partial A) = \partial A$.

Let $f \in A$ and let $\hat{f} \in C(\mathfrak{M}_A)$ be the Gelfand image of $f$. We denote the restriction of $\hat{f}$ on $\partial A$ by $\check{f}$ and the closure of the set $\{\check{f} : f \in A\}$ in $C(\partial A)$ by $\check{A}$.

Let $w \in A$ and $T = wU$. We associate with the operator $T$ the weighted composition operator $\check{T}$ on $\check{A}$ defined as
\begin{equation*}
  (\check{T}g)(t) = \check{w}(t)g(\varphi(t)), g \in \check{A}, t \in \partial A.
\end{equation*}

\section{main results}

\begin{theorem} \label{t2}
  Let $A$ be a unital semi-simple commutative Banach algebra, $U$ be an automorphism of $A$ and $w \in A^{-1}$. Let
  $T = wU$ and let $\check{T}$ be the associated weighted composition operator on $\check{A}$. Assume that
  
  \begin{equation} \label{eq1}
     \; U^n \neq I, n \in \mathds{N},
  \end{equation}
   
  \begin{equation}\label{eq2}
     \; \sum \limits_{n=-\infty}^\infty \frac{|\ln{\|(\check{T})^n\|}|}{1+n^2} < \infty.
  \end{equation}
  Then $\sigma(T) \supseteq \mathds{T}$.
  \end{theorem}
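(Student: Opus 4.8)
\emph{Proof proposal.} The plan is to reduce the statement to the single assertion $\sigma(\check T)=\mathds{T}$ and then transport it back to $T$. First I would dispose of the easy reductions. Since $w\in A^{-1}$ and $U$ is an automorphism, $T=wU$ is invertible and hence so is $\check T$; thus $\omega(n):=\|\check T^n\|$ is a submultiplicative weight on $\mathds{Z}$. Condition \eqref{eq2} is precisely the Beurling--Domar non-quasianalyticity condition for $\omega$, and in particular it forces $\ln\|\check T^{\pm n}\|=o(n)$, so that $r(\check T)=r(\check T^{-1})=1$ and therefore $\sigma(\check T)\subseteq\mathds{T}$. Next, the restriction map $\Phi\colon A\to\check A$, $\Phi f=\check f$, is a bounded surjective homomorphism, and the identity $\widehat{Uf}=\hat f\circ\varphi$ gives $\Phi T=\check T\Phi$. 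Surjectivity of $\Phi$ shows that if $\lambda-T$ is onto then so is $\lambda-\check T$, whence the surjectivity spectrum satisfies $\sigma_{su}(\check T)\subseteq\sigma(T)$. As $\sigma(\check T)\subseteq\mathds{T}$ has empty interior, every point of it is a boundary point of the spectrum and so lies in $\sigma_{su}(\check T)$; consequently $\sigma(\check T)\subseteq\sigma(T)$, and it suffices to prove $\sigma(\check T)=\mathds{T}$.

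The core of the argument is to show that the nonempty closed set $\sigma(\check T)\subseteq\mathds{T}$ meets every point of the circle, for which I would use \eqref{eq2} through a functional calculus. The map $(a_n)\mapsto\sum_n a_n\check T^n$ extends to a continuous homomorphism $\Psi\colon\ell^1(\mathds{Z},\omega)\to B(\check A)$; by Beurling--Domar the algebra $\ell^1(\mathds{Z},\omega)$ is regular, non-quasianalytic, and has character space $\mathds{T}$. Assume for contradiction that some $\mu\in\mathds{T}$ lies in the resolvent set. Using non-quasianalyticity I would choose $h\in\ell^1(\mathds{Z},\omega)$ with $\hat h\not\equiv 0$ but $\operatorname{supp}\hat h$ contained in a small arc about $\mu$ disjoint from $\sigma(\check T)$; regularity then yields $\Psi(h)=\sum_n h_n\check T^n=0$.

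It remains to contradict $\Psi(h)=0$ using the dynamics of $\varphi$, and this is where I expect the main difficulty. Writing $\check w_n=\prod_{k=0}^{n-1}\check w\circ\varphi^k$ for the cocycle, one has $\Psi(h)g=\sum_n h_n\check w_n\,(g\circ\varphi^n)$, so for each $t\in\partial A$ the absolutely convergent measure $\nu_t=\sum_n h_n\check w_n(t)\,\delta_{\varphi^n(t)}$ annihilates $\check A$. If $\check A$ were all of $C(\partial A)$ this would immediately force $h_n\check w_n(t)=0$, hence $h_n=0$ since $w$ is invertible, contradicting $\hat h\not\equiv0$. The obstacle is that $\check A$ is in general a proper uniform algebra possessing nonzero annihilating measures. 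I would circumvent this by localizing on the Choquet boundary: an annihilating measure assigns zero mass to every peak point, and $\varphi$ preserves the peak points because $\check A$ is $\varphi$-invariant. Thus if $\varphi$ has a non-periodic peak point $p$, taking $t=\varphi^{-n}(p)$ isolates the atom at $p$ and forces $h_n=0$ for every $n$, a contradiction. The only remaining possibility is that every peak point is periodic; by \eqref{eq1} these periods are then unbounded, since a common period $N$ would give $\varphi^N=\mathrm{id}$ on $\partial A$, hence $U^N=I$ by semi-simplicity. In that case I would invoke the original Kamowitz--Scheinberg mechanism: the natural quotient of $\check A$ onto functions on a periodic orbit of length $N$ intertwines $\check T$ with a cyclic weighted shift whose eigenvalues are the $N$-th roots of $\check w_N(t)$, that is, $N$ equally spaced points on the circle of radius $|\check w_N(t)|^{1/N}$; since $\sigma(\check T)\subseteq\mathds{T}$ this radius equals $1$, and as $N\to\infty$ these point sets become dense in $\mathds{T}$. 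In either case $\sigma(\check T)=\mathds{T}$, and therefore $\sigma(T)\supseteq\mathds{T}$.
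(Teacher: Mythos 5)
There is a genuine gap, and it sits at the step you describe as an ``easy reduction.'' The restriction map $\Phi\colon A\to\check A$, $\Phi f=\check f$, is \emph{not} surjective: by definition $\check A$ is the \emph{closure} of $\{\check f: f\in A\}$ in $C(\partial A)$, so $\Phi$ has dense range but in general a proper image (e.g.\ for the Wiener algebra, $\check A=C(\mathds{T})$ and $\Phi$ is far from onto). With only dense range, the implication ``$\lambda-T$ surjective $\Rightarrow$ $\lambda-\check T$ surjective'' degrades to ``$\lambda-\check T$ has dense range,'' which does not exclude $\lambda$ from $\sigma(\check T)$; boundary points of the spectrum routinely have dense range. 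So your chain $\partial\sigma(\check T)\subseteq\sigma_{su}(\check T)\subseteq\sigma(T)$ breaks at the second inclusion, and since $\sigma(\check T)\subseteq\mathds{T}$ consists entirely of boundary points, nothing of the transport survives. That this step cannot be soft is confirmed by the paper itself: Problem~\ref{pr1} poses exactly this transport (given only $U^n\neq I$ and $\sigma(\check T)\subseteq\mathds{T}$) as an open question, and Theorem~\ref{t3} proves it only under heavy dynamical and peak-point hypotheses. The paper's actual proof of Theorem~\ref{t2} uses condition~(\ref{eq2}) a \emph{second} time precisely here: from $\alpha\notin\sigma(T)$ one gets that $\lambda\mapsto R(\lambda,\check T)\check f$ extends analytically across the arc $\mathds{T}\cap U(\alpha,r)$ for $f\in A$; condition~(\ref{eq2}), via Wermer's Lemma~3, controls the growth of the resolvent near $\mathds{T}$ so that the family $\check G(R(\lambda,\check T)\check f_n)$ is normal, which lets one pass to arbitrary $g\in\check A$; and Dunford's theorem on weak analyticity then yields $\alpha\notin\sigma(\check T)$, contradicting $\sigma(\check T)=\mathds{T}$. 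Your proposal never uses~(\ref{eq2}) for the transport, which is the tell that something is missing.

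The second half of your argument (establishing $\sigma(\check T)=\mathds{T}$ via the Beurling algebra $\ell^1(\mathds{Z},\omega)$, regularity, and localization of annihilating measures at peak points along non-periodic orbits) is a reasonable alternative outline to the paper's citation of Theorems 3.12 and 4.2 of~\cite{Ki}, but it too has loose ends: you need peak points (or at least generalized peak points) to exist on suitable orbits, and the all-periodic case is not really finished --- the ``quotient onto a periodic orbit'' intertwining $\check T$ with a cyclic weighted shift requires the orbit to be a set of antisymmetry or a peak set, and you must still produce periodic peak-point orbits of unbounded period. These are repairable, but the transport step is not repairable by the method you propose; it needs the resolvent-growth/normal-family mechanism.
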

  
  \begin{proof} Assume to the contrary that there is an $\alpha \in \mathds{T}$ such that $\alpha \not \in \sigma(T)$. Let $U(\alpha, r)$ be an open disc centered at $\alpha$ of a radius $r >0$ such that 
  $\sigma(T) \cap U(\alpha, r) = \emptyset$.
  
  Condition (1) implies (see~\cite{We}) that $\sigma(\check{T}) \subseteq \mathds{T}$.  We claim that 
\begin{equation}\label{eq4}
  ((\lambda I - T)^{-1}f\check{)} = (\lambda I - \check{T})^{-1}\check{f},\; f \in A, \; \lambda \in D(\alpha,r) \setminus \mathds{T}.
\end{equation}
Notice first that for any $g \in A$ and $t \in \partial A$
\begin{equation}\label{eq5}
  (\lambda I - \check{T})\check{g}(t) = ((\lambda I - T)g\check{)}(t).
\end{equation}
Let $t \in \partial A$. Then $((\lambda I -\check{T})(\lambda I - \check{T})^{-1}\check{f})(t) = \check{f}(t)$
and in virtue of~(\ref{eq5}) $(\lambda I - \check{T}) ((\lambda I - T)^{-1}f\check{)}(t) = ((\lambda I - T)(\lambda I - T)^{-1}f\check{)}(t) = \check{f}(t)$. Recall that the operator $\lambda I - \check{T}$ is invertible, and therefore~(\ref{eq4}) is proved.

Let $\check{G}$ be a continuous functional on $\check{A}$. The formula $G(f) = \check{G}(\check{f})$ defines a continuous functional $G$ on $A$. Therefore, in virtue of~(\ref{eq4}), for any $f \in A$ and any $\check{G} \in (\check{A})^\prime$ the function $H(\lambda) = \check{G}((\lambda I - \check{T})^{-1}\check{f})$ is analytic in
$U(\alpha,r)$.

Let $g \in \check{A}$ and let $f_n \in A$ be such that  $\|\check{f_n} - g\|_{\check{A}} \rightarrow 0$. It follows from condition~(\ref{eq1}) (see \cite[Lemma 3]{We}) that the family

 $H_n(\lambda) = \check{G}((\lambda I - \check{T})^{-1}\check{f_n})$ is normal in $U(\alpha, r)$. Therefore the function $\check{G}((\lambda I - \check{T})g)$ is analytic in $U(\alpha, r)$.

It follows (~\cite[theorem 7b]{Du}, see also~\cite[p. 334]{Ta}) that the operator function $R(\lambda,\check{T})$ is analytic in $U(\alpha, r)$ and therefore $\alpha \not \in \sigma(\check{T})$. But it follows from condition~(\ref{eq2}) and from theorems 3.12 and 4.2 in~\cite{Ki} that $\sigma(\check{T}) = \mathds{T}$, a contradiction.
  \end{proof}

The next corollary directly strengthens the Kamowitz - Scheinberg theorem.

\begin{corollary} \label{c1}
  Let $A$ be a commutative semi-simple Banach algebra and $U$ be an automorphism of $A$. Let $B = A \oplus \mathds{1}$ be the corresponding unital semi-simple Banach algebra. We extend $U$ on $B$ in the standard way:
$U\mathds{1} = \mathds{1}$. Let $w \in B$ and $T = wU$. Let $S = T|A$. Assume conditions~(\ref{eq1}) 
and~(\ref{eq2}) of Theorem~\ref{t1}. Then $\sigma(S) \supseteq \mathds{T}$. 
\end{corollary}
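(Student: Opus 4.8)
\section*{Proof proposal for Corollary~\ref{c1}}

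The plan is to reduce the statement to Theorem~\ref{t2} applied to the unital algebra $B$, exploiting the fact that $A$ sits inside $B$ as a closed invariant subspace of codimension one. First I would record the structural facts. The algebra $A$ is precisely the kernel of the augmentation functional $B = A \oplus \mathds{C}\mathds{1} \to \mathds{C}$, hence a closed (indeed maximal) ideal of $B$ of codimension one. Since $U$ is the automorphism of $B$ extending the given one by $U\mathds{1} = \mathds{1}$, and since $A$ is an ideal, for every $a \in A$ we have $T a = w\, U a \in A$, because the product of $w \in B$ with $U a \in A$ again lies in $A$. Thus $A$ is a closed $T$-invariant subspace and $S = T|A$ is a well-defined bounded operator on $A$. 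Applying Theorem~\ref{t2} to $B$, the hypotheses~(\ref{eq1}) and~(\ref{eq2}) yield $\sigma(T) \supseteq \mathds{T}$, where $T$ is viewed as an operator on $B$.

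Next I would transfer this inclusion to $S$ by means of the spectral inclusion for an operator with a closed invariant subspace. With respect to the topological direct sum $B = A \oplus \mathds{C}\mathds{1}$, the operator $T$ is block upper triangular, the lower-left block being zero because $A$ is invariant; its diagonal blocks are $S$ acting on $A$ and the operator $\bar T$ induced on the one-dimensional quotient $B/A$. For such block upper triangular operators the resolvent $(\lambda I - T)^{-1}$ exists whenever both diagonal blocks are invertible, and therefore $\sigma(T) \subseteq \sigma(S) \cup \sigma(\bar T)$. Now $T\mathds{1} = w\, U\mathds{1} = w$, and writing $w = a_0 + c\mathds{1}$ with $a_0 \in A$ and $c \in \mathds{C}$, the class of $T\mathds{1}$ in $B/A$ equals $c$ times the class of $\mathds{1}$; hence $\bar T$ is multiplication by $c$ and $\sigma(\bar T) = \{c\}$.

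Combining the two inclusions gives $\mathds{T} \subseteq \sigma(T) \subseteq \sigma(S) \cup \{c\}$, so that $\mathds{T} \setminus \{c\} \subseteq \sigma(S)$. Since $\sigma(S)$ is closed and $\mathds{T} \setminus \{c\}$ is dense in $\mathds{T}$ (at most one point is removed), passing to the closure yields $\mathds{T} \subseteq \sigma(S)$, as required.

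I expect the structural steps above to be essentially routine; the one point deserving genuine care is verifying that the hypotheses of Theorem~\ref{t2} really are in force for $T = wU$ on $B$. In particular, the two-sided sum in~(\ref{eq2}) is only meaningful once $\check{T}$ is invertible, which forces $\check{w}$ to be non-vanishing on $\partial B$ and, correspondingly, $w$ to be invertible in $B$ as Theorem~\ref{t2} demands; I would therefore confirm $w \in B^{-1}$ explicitly (or carry it as a standing hypothesis). On the spectral side the only subtlety is the single possibly-excluded point $c$, which is handled cleanly by the density of $\mathds{T} \setminus \{c\}$ in $\mathds{T}$ together with the closedness of $\sigma(S)$.
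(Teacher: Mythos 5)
Your proposal is correct and follows essentially the same route as the paper: the paper likewise applies Theorem~\ref{t2} to $T$ on $B$, notes that the induced operator on the one-dimensional quotient $B/A$ has a singleton spectrum, and invokes the three-space property (your block upper-triangular inclusion $\sigma(T) \subseteq \sigma(S) \cup \sigma(\bar T)$) to force $\mathds{T}\setminus\{c\} \subseteq \sigma(S)$. Your explicit closure/density step at the end, and your remark that $w$ must in fact be invertible in $B$ for the hypotheses of Theorem~\ref{t2} to apply, are careful touches that the paper leaves implicit.
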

\begin{proof} The factor operator $\dot{T}$ acts on the one dimensional factor space $B/A$ and therefore its spectrum is a single point, say $\beta$. If there is an $\alpha \in \mathds{T}$ such that $\alpha \neq \beta$ and $\alpha \not \in \sigma(S)$, then by the three spaces property $\alpha \not \in \sigma(T)$ in contradiction with Theorem~\ref{t1}. 
\end{proof}

In connection with Theorem~\ref{t2} we pose the following problem.

\begin{problem} \label{pr1}
 Let $A$ be a unital semi-simple commutative Banach algebra, $U$ be an automorphism of $A$ and $w \in A$. Let
  $T = wU$ and let $\check{T}$ be the associated weighted composition operator on $\check{A}$. Assume that
$U^n \neq I, n \in \mathds{N}$, and that $\sigma(\check{T}) \subseteq \mathds{T}$. Is it true that 
$\mathds{T} \subseteq \sigma(T)$? 
\end{problem}

The next theorem provides a partial answer to Problem~\ref{pr1} under very heavy restrictions.

\begin{theorem} \label{t3}
  Let $A$ be a unital semi-simple commutative Banach algebra, $U$ be an automorphism of $A$ and $w \in A$. Let
  $T = wU$ and let $\check{T}$ be the associated weighted composition operator on $\check{A}$. Assume that
$U^n \neq I, n \in \mathds{N}$ and that $\sigma(\check{T}) \subseteq \mathds{T}$. Assume additionally that
$\partial A$ contains no $\varphi$-periodic points and that every point of $\partial A$ is a peak point for the algebra $\check{A}$. Then $\mathds{T} \subseteq \sigma(T)$.
\end{theorem}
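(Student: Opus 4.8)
The plan is to split the proof into two independent parts: first, to show that $\mathds{T}\subseteq\sigma(\check{T})$, and second, to transfer this information from $\check{T}$ to $T$. For the transfer I would simply rerun the argument already used in the proof of Theorem~\ref{t2}. That argument --- the resolvent identity~(\ref{eq4}), the normality of the family $H_n$ furnished by~\cite[Lemma 3]{We}, and~\cite[theorem 7b]{Du} --- uses only the aperiodicity~(\ref{eq1}) and the inclusion $\sigma(\check{T})\subseteq\mathds{T}$, both of which are hypotheses here; condition~(\ref{eq2}) entered the proof of Theorem~\ref{t2} solely to identify $\sigma(\check{T})$ with $\mathds{T}$ at the final step. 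Hence, granting $\sigma(\check{T})=\mathds{T}$, any $\alpha\in\mathds{T}\setminus\sigma(T)$ would force $\alpha\notin\sigma(\check{T})$, a contradiction, and $\mathds{T}\subseteq\sigma(T)$ follows.

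So the real content is to prove $\mathds{T}\subseteq\sigma(\check{T})$. First I would record the consequences of the hypothesis $\sigma(\check{T})\subseteq\mathds{T}$. Since $0\notin\sigma(\check{T})$, the operator $\check{T}$ is invertible, hence bounded below, and evaluating this lower bound on a peak function concentrated at $\varphi(t_0)$ forces $\check{w}(t_0)\neq 0$; by compactness $\check{w}$ is bounded away from $0$ and $\psi:=\ln|\check{w}|$ is a well-defined element of $C(\partial A)$. Moreover $\sigma(\check{T})\subseteq\mathds{T}$ gives spectral radius $1$ for both $\check{T}$ and $\check{T}^{-1}$, so $\|\check{T}^{\pm n}\|=e^{o(n)}$; equivalently the products $W_n(x)=\prod_{k=0}^{n-1}\check{w}(\varphi^k x)$ are subexponential in both directions. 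Finally $\sigma(\check{T})$ is a nonempty closed subset of $\mathds{T}$, so it has empty interior, every one of its points lies on its topological boundary, and therefore belongs to the approximate point spectrum. Fix such a $\lambda\in\sigma(\check{T})$ and unit vectors $g_k\in\check{A}$ with $\|(\check{T}-\lambda)g_k\|\to 0$. It now suffices to show that $\sigma(\check{T})$ is invariant under the rotations $z\mapsto e^{i\theta}z$, for then the nonempty closed rotation-invariant set $\sigma(\check{T})\subseteq\mathds{T}$ must equal $\mathds{T}$.

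To prove rotation invariance I would produce, for each $\theta$, a unimodular winding function. If a unimodular multiplier $u$ of $\check{A}$ satisfied $u\circ\varphi=e^{i\theta}u$, then a one-line computation gives $(\check{T}-e^{i\theta}\lambda)(u\,g)=e^{i\theta}u\,(\check{T}-\lambda)g$, so $e^{i\theta}\lambda$ would be an approximate eigenvalue and we would be done. No such global $u$ exists in general (this is a cohomological obstruction), so I would construct it only approximately. Using the sharper hypothesis that $\partial A$ has no $\varphi$-periodic points, together with compactness of $\partial A$, I would build, for every height $N$, a Rokhlin tower: an open set $B$ with $B,\varphi(B),\dots,\varphi^{N-1}(B)$ pairwise disjoint. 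The peak-point hypothesis is exactly what lets me realize inside $\check{A}$ both a smooth ``tower indicator'' and the winding function $u=\sum_{k=0}^{N-1}e^{ik\theta}\chi_k$, where $\chi_k$ is a peak function concentrated on $\varphi^k(B)$; this $u$ is unimodular and satisfies $u\circ\varphi=e^{i\theta}u$ except near the top and bottom floors of the tower. Multiplying the suitably localized approximate eigenvector by $u$ then makes $\|(\check{T}-e^{i\theta}\lambda)(u\,g)\|$ small, the only extra error coming from those two floors and being negligible for tall towers.

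The hard part will be the topological Rokhlin step and the bookkeeping around it. In the measurable category a tower exhausts all but a set of small measure and one localizes $g_k$ to it for free; in the purely topological setting a single tower cannot cover $\partial A$, so one must arrange that the tower carries the bulk of the uncontrolled vector $g_k$ while keeping $u$ and the tower indicator unimodular and inside $\check{A}$. I expect this to force a case analysis governed by the Birkhoff sums $S_j\psi(x)=\ln|W_j(x)|$: when these sums exhibit deep down-then-up excursions one can instead build, directly in $\check{A}$ and from scratch, bump-function approximate eigenvectors with coefficients $c_j=\mu^j/W_j(x_0)$ for every $\mu\in\mathds{T}$ simultaneously, the endpoint errors being killed by the interior minimum of $|W_j|$; the delicate residual case is when $\psi$ is a coboundary, the products $W_n$ stay bounded, and $\check{T}$ is similar to a unimodular-weight composition operator, where one must lean hardest on the peak-point hypothesis to keep the winding construction within $\check{A}$.
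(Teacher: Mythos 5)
Your proposal has a genuine gap in the ``transfer'' step, and it is the decisive one. You assert that condition~(\ref{eq2}) enters the proof of Theorem~\ref{t2} only to identify $\sigma(\check{T})$ with $\mathds{T}$ at the very end, so that the resolvent-identity/normal-families/Dunford argument can be rerun here using only $U^n\neq I$ and $\sigma(\check{T})\subseteq\mathds{T}$. That is not so: the normality of the family $H_n(\lambda)=\check{G}((\lambda I-\check{T})^{-1}\check{f_n})$ near the arc $\mathds{T}\cap U(\alpha,r)$ is precisely where~(\ref{eq2}) is needed --- the appeal to \cite[Lemma 3]{We} is a Levinson/log-log type normality criterion, which requires the growth bound on $\|R(\lambda,\check{T})\|$ as $\lambda$ approaches $\mathds{T}$ that the Beurling-type condition~(\ref{eq2}) supplies. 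Knowing only $\sigma(\check{T})\subseteq\mathds{T}$ gives uniform bounds on compact subsets of $U(\alpha,r)\setminus\mathds{T}$ but no control near the circle, and a family of analytic functions on a disc that is bounded off a diameter but blows up without restriction at the diameter need not be normal. So from $\alpha\notin\sigma(T)$ you cannot conclude $\alpha\notin\sigma(\check{T})$ by this route, and your reduction of the theorem to the claim $\mathds{T}\subseteq\sigma(\check{T})$ collapses.

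The paper's proof is built around exactly this obstacle, and in a way that makes your Part A (the Rokhlin-tower argument for rotation invariance of $\sigma(\check{T})$, which you yourself leave with acknowledged unresolved cases) unnecessary. Instead of the operator- or functional-valued resolvent, it evaluates the resolvent at a single point $s\in\partial A$ supplied by Lemma 3.6 of~\cite{Ki}, chosen so that $|\check{w}_n(s)|\leq 1$ and $|\check{w}_n(\varphi^{-n}(s))|\geq 1$ for all $n$ (inequalities~(\ref{eq6})). With these bounds the scalar functions $\lambda\mapsto(R(\lambda,\check{T})\check{f})(s)$ in~(\ref{eq7}) are dominated by $\|f\|/(|\lambda|-1)$ for $|\lambda|>1$ and by a similar first-order bound for $|\lambda|<1$, which is slow enough to yield normality of the relevant families with no hypothesis like~(\ref{eq2}). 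The peak-point and no-periodic-point hypotheses are then used not to build Rokhlin towers but to produce $g_m$ with $\|g_m\|=g_m(s)=1$ and $|g_m(\varphi^i(s))|<1/m$ for $0<|i|<m$; passing to the limit shows that the putative analytic extension across the arc would have to glue $1/\lambda$ (from $|\lambda|>1$) to $0$ (from $|\lambda|<1$), a contradiction. To rescue your plan you would have to supply a substitute for the normality step --- which is exactly the role of~(\ref{eq6}) --- or establish resolvent growth control near $\mathds{T}$ from the stated hypotheses, which is not available in general.
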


\begin{proof}
 Consider the operator $S$ on $\check{A}$ defined by the formula
  $(Sg)(t)=\check{w}(t)g(\varphi^{-1}(t))$. It is immediate to see that
  $\|S^n\| = \|\check{T}^n\|, n \in \mathds{N}$, and therefore $\sigma(S) \subseteq \mathds{T}$. It follows from Lemma 3.6 in~\cite{Ki} that there is a point $s \in \partial A$ such that
  
  \begin{equation}\label{eq6}
    |\check{w}_n(s)|\leq 1 \; \text{and} \; |\check{w}_n(\varphi^{-n}(s))| \geq 1, n \in \mathds{N},
  \end{equation}
where $\varphi^n, n \in \mathds{N}$ is the $n^{th}$ iteration of $\varphi$ and 
$\check{w}_n = \check{w}(\check{w}\circ \varphi) \ldots (\check{w} \circ \varphi^{n-1})$.

Let us assume like in the proof of Theorem~\ref{t1} that there exist an $\alpha \in \mathds{T}$ and an $r > 0$ such that $U(\alpha, r) \cap \sigma(T) = \emptyset$. It follows from the formula
 \begin{equation}\label{eq3} R(\lambda, \check{T}) = (\lambda I - \check{T})^{-1} =
 \left\{
   \begin{array}{ll}
     \sum \limits_{n=0}^\infty \frac{(\check{T})^n}{\lambda^{n+1}}, & \hbox{if $|\lambda| > 1$;} \\
     \sum \limits_{n=0}^\infty \lambda^n (\check{T})^{-(n+1)}, & \hbox{if $|\lambda| <1$,}
   \end{array}
 \right.
  \end{equation}
and from the equality~(\ref{eq4}) that for any $f \in A$ the function
 \begin{equation}\label{eq7} R(\lambda, \check{T})\check{f}(s) = 
 \left\{
   \begin{array}{ll}
     \sum \limits_{n=0}^\infty \frac{(\check{w}_n(s)\check{f}(\varphi^n(s)) }{\lambda^{n+1}}, & \hbox{if $|\lambda| > 1$;} \\
     \sum \limits_{n=0}^\infty \lambda^n \frac{\check{f}(\varphi^{-(n+1)}(s))}{\check{w}_{n+1}(\varphi^{-(n+1)}(s))}, & \hbox{if $|\lambda| <1$}
   \end{array}
 \right.
  \end{equation}
can be analytically extended on the disc $U(\alpha, r)$.

Let $g \in \check{A}$ and let $f_n \in A$ be such that $\|\check{f_n} - g\|_{\check{A}} \rightarrow 0$. Inequalities~(\ref{eq6}) guarantee that the family $R(\lambda, \check{T})f_n(s)$ is normal in $D(\alpha, r)$ an therefore the function $R(\lambda, \check{T})g(s)$ is analytic in $D(\alpha, r)$. Because $s$ is a peak point for $\check{A}$ and it is not $\varphi$-periodic, there is a sequence $g_m \in \check{A}$ such that $\|g_m\|=g_m(s) = 1$ and
$|g_m(\varphi^i(s))| < 1/m, 0 < |i| < m$. Applying again inequalities~(\ref{eq6}) we see that the family
$R(\lambda, \check{T})g_m(s)$ is normal in $D(\alpha, r)$ and therefore the function $1/\lambda$ continues analytically to $0$ through the arc $\mathds{T} \cap D(\alpha, r)$, a contradiction.
  \end{proof}

Next we intend to prove that the conditions in the statements of Theorems~\ref{t2} and~\ref{t3} guarantee that the spectrum of operator $T$ is connected, provided that $T$ is invertible. It follows from the next theorem.

\begin{theorem} \label{t4}
  Let $A$ be a unital semi-simple commutative Banach algebra, $U$ be an automorphism of $A$, and $w \in A$. Let
  $T = wU$ and let $\check{T}$ be the associated weighted composition operator on $\check{A}$. Assume that
\begin{enumerate} [(a)]
  \item $w$ is an invertible element of $A$.
  \item $\sigma(\check{T}) = \mathds{T} \subseteq \sigma(T)$.
\end{enumerate}
Then the set $\sigma(T)$ is connected.
\end{theorem}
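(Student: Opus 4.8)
The plan is to argue by contradiction using the Riesz holomorphic functional calculus together with the intertwining map $\Phi : A \to \check{A}$, $\Phi(f) = \check{f}$. The first step is to record the structural properties of $\Phi$. Since $A$ is semi-simple and $\partial A$ is the Shilov boundary, $\check{f} = 0$ forces $\|\hat{f}\|_{\mathfrak{M}_A} = \|\hat{f}\|_{\partial A} = 0$, hence $f = 0$; thus $\Phi$ is an injective bounded algebra homomorphism of $A$ onto the dense subalgebra $\{\check{f} : f \in A\}$ of $\check{A}$. Formula~(\ref{eq5}) says precisely that $\Phi$ intertwines the two operators, $\Phi T = \check{T}\Phi$, and consequently $\Phi(\lambda I - T) = (\lambda I - \check{T})\Phi$. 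Multiplying on the appropriate sides shows that for every $\lambda \in \rho(T) \cap \rho(\check{T})$ one has $\Phi R(\lambda, T) = R(\lambda, \check{T})\Phi$, which is the operator form of the identity~(\ref{eq4}).

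Next I would assume, contrary to the claim, that $\sigma(T)$ is disconnected and write $\sigma(T) = K_1 \sqcup K_2$ with $K_1, K_2$ disjoint, nonempty, and compact. By hypothesis~(b), $\mathds{T} \subseteq \sigma(T)$, and since $\mathds{T}$ is connected it lies entirely in one of the two pieces, say $\mathds{T} \subseteq K_1$; hence $K_2 \cap \mathds{T} = \emptyset$. Because $K_2$ and $\mathds{T}$ are disjoint compacts, I can choose a cycle $\Gamma \subseteq \rho(T)$ that has winding number $1$ about every point of $K_2$ and winding number $0$ about every point of $K_1$, and, crucially, that avoids $\mathds{T}$ entirely. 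Since $\sigma(\check{T}) = \mathds{T}$ by hypothesis~(b), this last property means $\Gamma \subseteq \mathds{C}\setminus \mathds{T} = \rho(\check{T})$, so the resolvent intertwining identity from the first step is valid all along $\Gamma$. (Here condition~(a) is what guarantees $T$ is invertible and $\sigma(\check T)=\mathds T$ is consistent, so that $0 \notin \sigma(T)$ and the geometry is as expected.)

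Finally I would form the Riesz spectral projection for $K_2$,
\begin{equation*}
  P = \frac{1}{2\pi i}\int_{\Gamma} R(\lambda, T)\, d\lambda ,
\end{equation*}
which is nonzero precisely because $K_2 \neq \emptyset$. Pushing the bounded operator $\Phi$ through the contour integral and using $\Phi R(\lambda,T) = R(\lambda,\check{T})\Phi$ on $\Gamma$ gives
\begin{equation*}
  \Phi P = \left(\frac{1}{2\pi i}\int_{\Gamma} R(\lambda, \check{T})\, d\lambda\right)\Phi = \check{P}\,\Phi,
\end{equation*}
where $\check{P}$ is the $\check{T}$-spectral projection associated with the part of $\sigma(\check{T})$ encircled by $\Gamma$. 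But $\Gamma$ has winding number $0$ about every point of $\mathds{T} = \sigma(\check{T})$, so it encloses no spectrum of $\check{T}$ and therefore $\check{P} = 0$. Hence $\Phi P = 0$, and the injectivity of $\Phi$ forces $P = 0$, contradicting $P \neq 0$. This contradiction shows $\sigma(T)$ is connected.

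The routine parts are the identities for $\Phi$ and the standard facts of the functional calculus; the step that requires care is the construction of the separating cycle $\Gamma$ so that it simultaneously lies in $\rho(T)$ (to split $K_1$ from $K_2$) and in $\rho(\check{T}) = \mathds{C}\setminus\mathds{T}$ (to avoid $\mathds{T}$), followed by the winding-number bookkeeping that identifies $\check{P} = 0$. I expect this geometric selection of $\Gamma$, together with justifying that $\Phi$ may be moved inside the contour integral, to be the only genuine obstacle.
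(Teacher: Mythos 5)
Your proof is correct, and it reaches the contradiction by a different mechanism than the paper, although both arguments pivot on the Riesz decomposition induced by a disconnection of $\sigma(T)$. The paper uses hypothesis (a) to reduce, after possibly replacing $T$ by $T^{-1}$, to the case where the extra component $\sigma$ lies in a disc $D(0,R)$ with $R<1$; it then takes a nonzero $f$ in the corresponding spectral subspace and observes that $\|\check{T}^n\check{f}\|_{\check{A}}\le\|T^nf\|\le CR^n$, which is incompatible with $\sigma(\check{T})=\mathds{T}$ (then $\rho(\check{T}^{-1})=1$, so $\|\check{f}\|\le\|\check{T}^{-n}\|\,\|\check{T}^n\check{f}\|\to 0$, forcing $\check{f}=0$ and hence $f=0$ by injectivity of $f\mapsto\check{f}$). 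You instead keep the Riesz projection $P$ itself, push it through the intertwining map $\Phi$, and identify $\Phi P$ with $\check{P}\Phi=0$ because $\Gamma$ has winding number zero about $\sigma(\check{T})=\mathds{T}$; note that the ``geometric selection'' you flag as the delicate point is actually free, since $\mathds{T}\subseteq\sigma(T)$ forces $\rho(T)\subseteq\rho(\check{T})$, so any cycle in $\rho(T)$ automatically avoids $\mathds{T}$. Both proofs rest on the same two ingredients --- injectivity of $\Phi$ and the fact that $\sigma(\check{T})=\mathds{T}$ leaves no room for a spectral part of $\check{T}$ off the circle --- but your version buys something: it treats an arbitrary component disjoint from $\mathds{T}$ without normalizing it into the unit disc, and as written it never invokes hypothesis (a), only (b), so it establishes a marginally stronger statement.
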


\begin{proof}
  Assume to the contrary that $\sigma(T)$ is not connected. It follows from $(a)$ and $(b)$ that we can assume without loss of generality that there is a component $\sigma$ of $\sigma(T)$ and a real number $R$, $0 < R < 1$ such that $\sigma \subset D(0,R)$. Let $\Sigma \subset A$ be the spectral subspace of $T$ corresponding to $\sigma$ and let $f \in \Sigma, f \neq 0$. Then
\begin{equation*}
  \|\check{T}^n \check{f}\|_{\check{A}} \leq \|T^n f\| \leq CR^n,
\end{equation*}
in contradiction with the condition $\sigma(\check{T}) = \mathds{T}$.
\end{proof}

\begin{corollary} \label{c2}
  Let $A$ be a semi-simple commutative Banach algebra, $U$ be an automorphism of $A$ and $U^n \neq I, n \in \mathds{N}$. Then $\sigma(U)$ is connected.
\end{corollary}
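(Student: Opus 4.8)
The plan is to obtain the corollary as the special case $w = \mathds{1}$ of Theorem~\ref{t4}, in which $T = U$ and $\check{T}$ is an unweighted composition operator. First I would reduce to the unital setting: if $A$ has no unit, pass to $B = A \oplus \mathds{1}$ with $U$ extended by $U\mathds{1} = \mathds{1}$, as in Corollary~\ref{c1}. Then $U$ acts on $B$ as a block-diagonal operator fixing $\mathds{1}$, so $\sigma_B(U) = \sigma_A(U) \cup \{1\}$; once $\mathds{T} \subseteq \sigma_A(U)$ is known the two spectra coincide, and connectedness of $\sigma_B(U)$ transfers to $\sigma_A(U)$. Thus it suffices to verify hypotheses (a) and (b) of Theorem~\ref{t4} in the unital case.

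Hypothesis (a) holds trivially because the weight is the invertible element $\mathds{1}$. For (b), I would note that $\check{T}g = g\circ\varphi$ is the composition operator induced by the homeomorphism $\varphi$ of $\partial A$. Since $\check{A}$ carries the sup-norm of $C(\partial A)$ and $\varphi$ is a bijection of $\partial A$, the operator $\check{T}$ is a surjective isometry, with inverse $g \mapsto g\circ\varphi^{-1}$. Hence $\|\check{T}^n\| = 1$ for all $n \in \mathds{Z}$, condition~(\ref{eq2}) is satisfied (the series is identically zero), and theorems 3.12 and 4.2 in~\cite{Ki} give $\sigma(\check{T}) = \mathds{T}$, exactly as in the proof of Theorem~\ref{t2}.

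To finish hypothesis (b) I would invoke Theorem~\ref{t2} itself: the assumption $U^n \neq I$ together with condition~(\ref{eq2}) yields $\mathds{T} \subseteq \sigma(U) = \sigma(T)$. Combining, $\sigma(\check{T}) = \mathds{T} \subseteq \sigma(T)$, which is precisely (b), so Theorem~\ref{t4} shows that $\sigma(T) = \sigma(U)$ is connected. (In the non-unital reduction the same application of Theorem~\ref{t2} to $B$ gives $\mathds{T}\subseteq\sigma_B(U)$, whence $\mathds{T}\subseteq\sigma_A(U)$ by closedness, justifying the identification $\sigma_A(U)=\sigma_B(U)$ used above.)

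The only step I expect to require genuine care is the unitization: one must confirm that adjoining a unit alters the operator spectrum of $U$ only by the possible eigenvalue $1$ (which is forced into $\sigma_A(U)$ anyway), and that the Shilov boundary and the homeomorphism $\varphi$ pass to $B$ compatibly so that $\check{T}$ remains a surjective isometry on $\check{B}$. The remaining steps are routine verifications of the hypotheses of Theorem~\ref{t4}.
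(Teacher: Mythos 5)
Your proposal is correct and follows exactly the route the paper intends: Corollary~\ref{c2} is presented as an immediate consequence of Theorem~\ref{t4}, with hypothesis (b) supplied by Theorem~\ref{t2} (via the unitization of Corollary~\ref{c1} in the non-unital case), since for $w=\mathds{1}$ the operator $\check{T}$ is a surjective isometry of $\check{A}$ and condition~(\ref{eq2}) holds trivially. The paper leaves these verifications implicit, and your write-up fills them in accurately, including the observation that $\sigma_B(U)=\sigma_A(U)\cup\{1\}$ collapses to $\sigma_A(U)$ once $\mathds{T}\subseteq\sigma_A(U)$ is known.
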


\begin{remark} \label{r1}
  Corollary~\ref{c2} was stated (with an incorrect proof) in~\cite{Sc} and proved in~\cite{Hu}.
\end{remark}

Similarly to Theorem~\ref{t3} we can prove the following proposition.

\begin{proposition} \label{p1}
  Let $A$ be a unital semi-simple commutative Banach algebra, $U$ be an automorphism of $A$, and $w \in A$. Let
  $T = wU$ and let $\check{T}$ be the associated weighted composition operator on $\check{A}$. Assume that every point in $\partial A$ is a peak point for $\check{A}$ and there are a $\lambda \in \mathds{C}$ and a not $\varphi$-periodic point $s \in \partial A$ such that
\begin{equation}\label{eq8}
    |\check{w}_n(s)|\leq |\lambda|^n \; \text{and} \; |\check{w}_n(\varphi^{-n}(s))| \geq |\lambda|^n, n \in \mathds{N},
  \end{equation} 
Then $\lambda \mathds{T} \subseteq \sigma(T)$.
\end{proposition}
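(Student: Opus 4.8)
The plan is to follow the proof of Theorem~\ref{t3} almost verbatim, with the unit circle replaced by the circle $\lambda\mathds{T}$ and the hypotheses~(\ref{eq8}) playing the role of~(\ref{eq6}). First I would dispose of the degenerate case: if $\lambda=0$, then~(\ref{eq8}) forces $\check{w}(s)=0$, so $w$ is not invertible in $A$; since $U$ is an automorphism, $T=wU$ is then not invertible and $0\in\sigma(T)$, i.e.\ $\lambda\mathds{T}\subseteq\sigma(T)$. So I may assume $\lambda\neq0$. Arguing by contradiction, suppose there are $\alpha\in\lambda\mathds{T}$ and $r>0$ with $U(\alpha,r)\cap\sigma(T)=\emptyset$; note $\alpha\neq0$.

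The one genuine difference from Theorem~\ref{t3} is that the global assumption $\sigma(\check{T})\subseteq\mathds{T}$ is dropped, so the resolvent series~(\ref{eq3}) are no longer available and I would instead compute directly with $R(\mu,T)$ on $A$. For $f\in A$ and $\mu\in U(\alpha,r)$ put $h_\mu=R(\mu,T)f$ and $\Psi^f_t(\mu)=\check{h}_\mu(t)$, $t\in\partial A$, which is analytic in $\mu$ on all of $U(\alpha,r)$. From $f=(\mu I-T)h_\mu$ and~(\ref{eq5}) one gets $\check{f}=(\mu I-\check{T})\check{h}_\mu$, which evaluated at $t$ is the orbit recursion $\check{f}(t)=\mu\,\Psi^f_t(\mu)-\check{w}(t)\,\Psi^f_{\varphi(t)}(\mu)$. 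Iterating it forward at $t=s$ produces the identity $\Psi^f_s(\mu)=\sum_{n=0}^{N-1}\check{w}_n(s)\check{f}(\varphi^n(s))/\mu^{n+1}+\check{w}_N(s)\mu^{-N}\Psi^f_{\varphi^N(s)}(\mu)$; since $|\check{w}_N(s)|\le|\lambda|^N$ and $\Psi^f$ is bounded on compacta, the remainder vanishes for $|\mu|>|\lambda|$, giving the outer series. Iterating backward and using $|\check{w}_n(\varphi^{-n}(s))|\ge|\lambda|^n$ gives the companion inner series for $|\mu|<|\lambda|$. As $\Psi^f_s$ is analytic on the whole disc, these two series are mutual analytic continuations across the arc $\lambda\mathds{T}\cap U(\alpha,r)$.

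Next I would pass from $\check{f}$ to an arbitrary $g\in\check{A}$, as in Theorems~\ref{t2} and~\ref{t3}: choosing $f_n\in A$ with $\|\check{f_n}-g\|_{\check{A}}\to0$, the inequalities~(\ref{eq8}) bound the outer and inner series uniformly on compact subsets of the two half-discs, and the normal-family argument of~\cite[Lemma 3]{We} together with~\cite{Ki} yields a function $\Phi_g$ analytic on $U(\alpha,r)$ represented by the outer series for $|\mu|>|\lambda|$ and by the inner series for $|\mu|<|\lambda|$. Finally, because $s$ is a peak point for $\check{A}$ and is not $\varphi$-periodic, I pick $g_m\in\check{A}$ with $\|g_m\|=g_m(s)=1$ and $|g_m(\varphi^i(s))|<1/m$ for $0<|i|<m$. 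A direct estimate using~(\ref{eq8}) shows $\Phi_{g_m}\to1/\mu$ locally uniformly on $\{|\mu|>|\lambda|\}\cap U(\alpha,r)$ and $\Phi_{g_m}\to0$ locally uniformly on $\{|\mu|<|\lambda|\}\cap U(\alpha,r)$. By normality the limit is analytic on all of $U(\alpha,r)$, so $1/\mu$ would continue analytically to $0$ across $\lambda\mathds{T}\cap U(\alpha,r)$; since $\alpha\neq0$ this is absurd, and the contradiction proves $\lambda\mathds{T}\subseteq\sigma(T)$.

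I expect the main obstacle to be exactly this passage to $\check{A}$ and the normality on the \emph{full} disc. Because $\sigma(\check{T})\subseteq\mathds{T}$ is not assumed, the two series genuinely blow up as $\mu\to\lambda\mathds{T}$, so uniform boundedness on the two half-discs does not by itself furnish a bound near the arc; it is precisely here that the pointwise inequalities~(\ref{eq8}) and the peak-point hypothesis must be combined, in the spirit of the normal-family estimates of~\cite{We} and~\cite{Ki}, to guarantee that $\{\Phi_{g_m}\}$ is normal across $\lambda\mathds{T}$. The earlier steps (the orbit recursion and the vanishing of the remainders) are routine once the scaling by $|\lambda|$ is built in.
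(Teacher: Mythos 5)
Your argument is correct and is essentially the paper's: Proposition~\ref{p1} is given there only with the remark that it is proved ``similarly to Theorem~\ref{t3}'', and your write-up is exactly that adaptation --- series representation of $R(\mu,T)f$ along the forward and backward orbit of $s$, Wermer-type normality from the bound $C\bigl/\bigl|\,|\mu|-|\lambda|\,\bigr|$ supplied by~(\ref{eq8}), and the peak functions $g_m$ forcing $1/\mu$ to continue analytically to $0$ across $\lambda\mathds{T}$. Your one deviation, replacing the Neumann series~(\ref{eq3}) for $R(\mu,\check{T})$ by the forward/backward orbit recursion for $R(\mu,T)$ itself, is precisely the right way to handle the absence of the hypothesis $\sigma(\check{T})\subseteq\mathds{T}$, a point the paper leaves implicit.
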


Combining Proposition~\ref{p1} with Lemma 3.6 and Theorem 3.29 in~\cite{Ki} we obtain the following:

\begin{theorem} \label{t5}
  Let $A$ be a unital semi-simple commutative Banach algebra, $U$ be an automorphism of $A$ and $w \in A$. Let $T = wU$ and let $\check{T}$ be the associated weighted composition operator on $\check{A}$. Assume that every point of $\partial A$ is a peak point for $\check{A}$. Let $\lambda \in \sigma(\check{T})$.

$(1)$ If $\lambda \mathds{T} \cap \sigma(T) = \emptyset$ then $\partial A$ is the union of three disjoint sets
$K_1, K_2, O$ with the properties
\begin{enumerate} [(i)]
  \item The sets $K_1$ and $K_2$ are closed and $|\check{w}| > 0$ on $K_2$.  
  \item $\rho(\check{T}, C(K_1)) < |\lambda|$ and $\rho(\check{T}^{-1}, C(K_2)) < |\lambda|$.
  \item If $t \in O$ then all accumulation points of the sequence $\varphi^n(s), n \in\mathds{N}$, lay in $K_2$ and all accumulation points of the sequence $\varphi^{-n}(s), n \in \mathds{N}$, lay in $K_1$.
\end{enumerate}

$(2)$ Assume additionally that the set of $\varphi$-periodic points is of first category in $\partial A$. If there is a $\lambda \in \sigma(\check{T})$ such that $\lambda \mathds{T} \cap \sigma(T) \neq \emptyset$, but
$\lambda\mathds{T} \not \subseteq \sigma(T)$ then there is a $\varphi$-periodic point $s \in \partial A$ of the smallest period $p$ such that $\check{w}_p(s) = \lambda^p$.
\end{theorem}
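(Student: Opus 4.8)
The plan is to read off both parts from the dynamical spectral theory of $\check{T}$ developed in~\cite{Ki}, using Proposition~\ref{p1} (and the underlying Lemma 3.6 of~\cite{Ki}) to convert orbit-wise estimates on the weight into membership in $\sigma(T)$. Throughout, the peak-point hypothesis is what lets us move freely between the Gelfand picture on $\partial A$ and genuine elements of $\check{A}$, exactly as in the proof of Theorem~\ref{t3}.

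For part (1), I would apply Theorem 3.29 of~\cite{Ki} to $\check{T}$ at the spectral level $|\lambda|$. That theorem supplies the partition $\partial A = K_1 \sqcup K_2 \sqcup O$ together with the local spectral-radius estimates $\rho(\check{T},C(K_1)) < |\lambda|$ and $\rho(\check{T}^{-1},C(K_2)) < |\lambda|$, and the transit description (iii); the bound on $\check{T}^{-1}$ over $K_2$ forces $|\check{w}| > 0$ there, which gives (i). The one point that must be checked by hand is that $O$ carries no orbit that is \emph{balanced} in the sense of~(\ref{eq8}): were there such a (necessarily non-$\varphi$-periodic) point, Proposition~\ref{p1} would place $\lambda\mathds{T} \subseteq \sigma(T)$, contradicting $\lambda\mathds{T}\cap\sigma(T)=\emptyset$. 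Hence the decomposition produced by Theorem 3.29 is precisely the clean one asserted in (i)--(iii).

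For part (2), the strategy is a pigeonhole between Lemma 3.6 and Proposition~\ref{p1}. Since $\lambda\in\sigma(\check{T})$, Lemma 3.6 of~\cite{Ki} produces a point $s_0 \in \partial A$ for which the balanced inequalities~(\ref{eq8}) hold at the level $|\lambda|$. Because $\lambda\mathds{T}\not\subseteq\sigma(T)$, the contrapositive of Proposition~\ref{p1} forbids $s_0$ from being non-$\varphi$-periodic, so $s_0$ is $\varphi$-periodic of some smallest period $p$. Evaluating~(\ref{eq8}) at $n=p$ along the orbit collapses the two inequalities to the single modulus identity $|\check{w}_p(s_0)| = |\lambda|^p$. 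It then remains to upgrade this to the phase-exact equality $\check{w}_p(s)=\lambda^p$: the orbit of $s_0$ supports a finite $\check{T}$-invariant block whose eigenvalues are the $p$-th roots of $\check{w}_p(s_0)$, equally spaced on $|\lambda|\mathds{T}$, all lying in $\sigma(\check{T})$ and contributing points to $\sigma(T)\cap\lambda\mathds{T}$. The hypothesis $\lambda\mathds{T}\cap\sigma(T)\neq\emptyset$, together with first category of the $\varphi$-periodic set (so that a genuine full circle cannot come from the non-periodic part), selects the orbit whose root set actually contains $\lambda$, which is exactly the statement $\check{w}_p(s)=\lambda^p$.

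The main obstacle I anticipate is this last phase-upgrade in part (2): passing from $|\check{w}_p(s)|=|\lambda|^p$ to $\check{w}_p(s)=\lambda^p$ and pinning down the \emph{smallest} period requires controlling which of the finitely many orbit-eigenvalues meet $\sigma(T)$, and this is precisely where the first-category hypothesis on the $\varphi$-periodic points must be used to rule out a spurious full circle arising from non-periodic orbits. Making the interaction between Theorem 3.29 and Proposition~\ref{p1} sharp enough to isolate a single periodic orbit, rather than merely its modulus, is the delicate step.
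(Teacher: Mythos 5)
The paper records no proof of Theorem~\ref{t5} beyond the sentence that it is obtained ``combining Proposition~\ref{p1} with Lemma 3.6 and Theorem 3.29 in [Ki]'', and your proposal assembles exactly those ingredients, so strategically you are on the paper's route. However, two steps in your sketch do not close. In part (1), Theorem 3.29 of [Ki] is used in this paper (see the proof of Proposition~\ref{p2}) for $\lambda$ in the residual spectrum $\sigma_r(\check{T})=\sigma(\check{T})\setminus\sigma_{a.p.}(\check{T})$, so before ``applying it at level $|\lambda|$'' you must first prove $\lambda\notin\sigma_{a.p.}(\check{T})$. Your parenthetical that a balanced point would be ``necessarily non-$\varphi$-periodic'' is unjustified: part (1) carries no hypothesis on periodic points. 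The gap is fillable --- if the point $s$ furnished by Lemma 3.6 is $\varphi$-periodic of period $p$, then $|\check{w}_p(s)|=|\lambda|^p$, and the span of $\delta_s,\dots,\delta_{\varphi^{p-1}(s)}$ in $A^\prime$ is a $T^\prime$-invariant finite-dimensional subspace whose eigenvalues are the $p$-th roots of $\check{w}_p(s)$; these are eigenvalues of $T^\prime$ lying on $\lambda\mathds{T}$, again contradicting $\lambda\mathds{T}\cap\sigma(T)=\emptyset$ --- but it must be said.

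The serious problem is the ``phase upgrade'' in part (2), which you correctly identify as the delicate step but do not actually perform, and which your direct route cannot perform. Lemma 3.6 and the inequalities~(\ref{eq8}) involve only $|\lambda|$; the periodic balanced point $s_0$ you extract satisfies $|\check{w}_{p}(s_0)|=|\lambda|^{p}$, but nothing in your argument ties the argument of $\check{w}_{p}(s_0)$ to the argument of $\lambda$: the root set $\{\mu:\mu^{p}=\check{w}_{p}(s_0)\}$ of the orbit you found lies on the circle $\lambda\mathds{T}$ yet may perfectly well miss $\lambda$ itself. Saying that the hypotheses ``select the orbit whose root set actually contains $\lambda$'' is a restatement of the desired conclusion, not a proof of it. Any correct argument must inject the phase of $\lambda$ from somewhere --- either from the fine structure of $\sigma_{a.p.}(\check{T})$ over periodic orbits in [Ki] (which $\mu$ with $|\mu|=|\lambda|$ actually lie in $\sigma(\check{T})$), or from the topology of $\lambda\mathds{T}\cap\sigma(T)$ as a nonempty proper closed subset of the circle (whose relative boundary points lie in $\sigma_{a.p.}(T)$), combined with the observation that every $\mu\in\lambda\mathds{T}$ witnessed by a non-periodic balanced point forces the entire circle into $\sigma(T)$ via Proposition~\ref{p1} (and, for $\mu\in\sigma_r(\check{T})$ with $O\neq\emptyset$, the eigenfunctional construction of Proposition~\ref{p2} produces a whole annulus of eigenvalues of $T^\prime$, again forcing the full circle). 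Your sketch uses neither mechanism, so as written the conclusion $\check{w}_p(s)=\lambda^p$ does not follow.
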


Recall that if $K$ is a compact Hausdorff space and $\varphi$ is a homeomorphism of $K$ onto itself then a point $k \in K$ is called $\varphi$-wandering if there is an open neighborhood $V$ of $k$ such that the sets $cl \, \varphi^n(V), n \in \mathds{Z}$, are pairwise disjoint.

\begin{corollary} \label{c3}
  Let $A$ be a unital semi-simple commutative Banach algebra, $U$ be an automorphism of $A$ and $w \in A$. Let
  $T = wU$ and let $\check{T}$ be the associated weighted composition operator on $\check{A}$. Assume that every point of $\partial A$ is a peak point for $\check{A}$. Assume that $\partial A$ contains no $\varphi$-wandering points. Then
 \begin{equation*}
  |\sigma(\check{T})| \subset |\sigma(T)|
\end{equation*}
Moreover, if we assume additionally that $\partial A$ contains no $\varphi$-periodic points then
\begin{equation*}
  \sigma(\check{T}) \subset \sigma(T).
\end{equation*}
\end{corollary}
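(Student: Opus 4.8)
The plan is to deduce both inclusions from Theorem~\ref{t5}, whose hypotheses are available here because every point of $\partial A$ is assumed to be a peak point for $\check{A}$. The whole statement reduces to the single claim that every $\lambda\in\sigma(\check{T})$ satisfies $\lambda\mathds{T}\cap\sigma(T)\neq\emptyset$; this is exactly the inclusion $|\sigma(\check{T})|\subseteq|\sigma(T)|$, and the sharper inclusion $\sigma(\check{T})\subseteq\sigma(T)$ will then follow from part $(2)$ of Theorem~\ref{t5}. So first I would fix $\lambda\in\sigma(\check{T})$ and argue by contradiction, assuming $\lambda\mathds{T}\cap\sigma(T)=\emptyset$.

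Under this assumption part $(1)$ of Theorem~\ref{t5} supplies a decomposition $\partial A=K_1\sqcup K_2\sqcup O$ with properties $(i)$--$(iii)$. The key step is to show that the transient set $O$ must be empty. I would verify that every point $t\in O$ is $\varphi$-wandering: by $(iii)$ the forward orbit of $t$ accumulates only in $K_2$ and the backward orbit only in $K_1$, and since $K_1,K_2$ are disjoint compacta the orbit of $t$ is necessarily infinite and eventually leaves every fixed neighborhood, entering a fixed neighborhood $V_2\supseteq K_2$ for all large positive indices and a disjoint neighborhood $V_1\supseteq K_1$ for all large negative indices. Only finitely many orbit points then lie outside $V_1\cup V_2$, and a separation argument produces a neighborhood $W$ of $t$ whose iterates $\mathrm{cl}\,\varphi^n(W)$ are pairwise disjoint, so $t$ is wandering. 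As $\partial A$ contains no $\varphi$-wandering points, $O=\emptyset$. Consequently $\partial A=K_1\sqcup K_2$ is a disjoint union of closed, hence clopen, $\varphi$-invariant sets, which yields a decomposition $\check{A}=\check{A}|_{K_1}\oplus\check{A}|_{K_2}$ that $\check{T}$ respects, so that $\sigma(\check{T})=\sigma(\check{T}|_{K_1})\cup\sigma(\check{T}|_{K_2})$. But property $(ii)$ prevents $\lambda$ from lying in either $\sigma(\check{T}|_{K_1})$ or $\sigma(\check{T}|_{K_2})$, whence $\lambda\notin\sigma(\check{T})$, contradicting the choice of $\lambda$. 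Therefore $\lambda\mathds{T}\cap\sigma(T)\neq\emptyset$, which proves $|\sigma(\check{T})|\subseteq|\sigma(T)|$.

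For the second inclusion, assume in addition that $\partial A$ has no $\varphi$-periodic points. Then the set of periodic points is empty, hence of first category, so part $(2)$ of Theorem~\ref{t5} applies and rules out the possibility that some $\lambda\in\sigma(\check{T})$ has $\lambda\mathds{T}\cap\sigma(T)\neq\emptyset$ while $\lambda\mathds{T}\not\subseteq\sigma(T)$, for such a $\lambda$ would force the existence of a $\varphi$-periodic point $s$ with $\check{w}_p(s)=\lambda^p$. Combining this dichotomy with the inclusion already established, every $\lambda\in\sigma(\check{T})$ satisfies $\lambda\mathds{T}\subseteq\sigma(T)$, and in particular $\lambda\in\sigma(T)$; thus $\sigma(\check{T})\subseteq\sigma(T)$.

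The main obstacle is the middle step, namely showing that points of the transient set $O$ are genuinely $\varphi$-wandering: this is where the spectral and analytic information packaged in properties $(i)$--$(iii)$ has to be converted into the topological-dynamical statement about disjoint iterates. The delicate point is to control the infinitely many forward and backward iterates of a single small neighborhood $W$ of $t$ simultaneously, ensuring that the tails shadow the orbit into the disjoint compacta $K_2$ and $K_1$ without overlapping; the escape of the orbit to two disjoint compact sets is precisely what should make this separation possible. A secondary and comparatively routine point is justifying the clopen splitting $\check{A}=\check{A}|_{K_1}\oplus\check{A}|_{K_2}$ together with the $\varphi$-invariance of $K_1$ and $K_2$, which rests on the Shilov idempotent theorem and on the fact that $(ii)$ already presupposes $\check{T}$ acting on $C(K_1)$ and $\check{T}^{-1}$ on $C(K_2)$.
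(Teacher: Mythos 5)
The paper gives no proof of Corollary~\ref{c3}; it is presented as an immediate consequence of Theorem~\ref{t5}, and your architecture --- fix $\lambda\in\sigma(\check{T})$, assume $\lambda\mathds{T}\cap\sigma(T)=\emptyset$, invoke the decomposition $\partial A=K_1\sqcup K_2\sqcup O$ of Theorem~\ref{t5}(1), use the absence of wandering points to force $O=\emptyset$, contradict $\lambda\in\sigma(\check{T})$ via property $(ii)$ and the clopen splitting, and then get the second inclusion from Theorem~\ref{t5}(2) --- is surely the intended derivation. Your handling of part $(2)$ and the passage from ``$\lambda\mathds{T}\cap\sigma(T)\neq\emptyset$ for all $\lambda\in\sigma(\check{T})$'' to $|\sigma(\check{T})|\subseteq|\sigma(T)|$ are both correct.

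The genuine gap is the step you yourself flag: ``every $t\in O$ is $\varphi$-wandering.'' As written, your separation argument does not close. Property $(iii)$ controls only the orbit of the single point $t$; it gives $N$ with $\varphi^n(t)\in V_2$ and $\varphi^{-n}(t)\in V_1$ for $n\geq N$, but pointwise attraction to $K_2$ is not uniform on any neighborhood $W$ of $t$, so ``$\varphi^n(W)\subseteq V_2$ for all $n\geq N$'' is unjustified no matter how small $W$ is. More seriously, even if the forward tail of $W$ did land in $V_2$, all of those images $\varphi^n(W)$, $n\geq N$, sit inside the \emph{same} set $V_2$, so confining them there does nothing toward their \emph{pairwise} disjointness --- consider a product of an irrational rotation with a north--south map, where the images keep circulating inside any neighborhood of the attracting set and disjointness has to come from a strictly monotone ``height'' along orbits, not from the limit-set data alone. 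Note also that you only need the weaker statement that a nonempty $O$ contains \emph{some} wandering point (the same unproved assertion is used in the proof of Theorem~\ref{t6}, where a wandering open set $V\subseteq O$ is produced without comment); to prove even that, one should exploit the quantitative content of property $(ii)$ --- the spectral-radius bounds $\rho(\check{T},C(K_1))<|\lambda|$ and $\rho(\check{T}^{-1},C(K_2))<|\lambda|$ encode uniform geometric growth/decay of the weights $\check{w}_n$ near $K_1$ and $K_2$, which is what supplies the Lyapunov-type function separating consecutive images --- or else go back to the finer description of $K_1,K_2,O$ in Theorem 3.29 of~\cite{Ki}. Until that lemma is actually proved, the first inclusion of the corollary is not established.
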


We end this section with the following proposition.

\begin{proposition} \label{p2}
 Let $A$ be a unital semi-simple commutative Banach algebra, $U$ be an automorphism of $A$ and $w \in A$. Let
  $T = wU$ and let $\check{T}$ be the associated weighted composition operator on $\check{A}$. Then
  
  \begin{equation*}
    \sigma_r(\check{T}) = \sigma(\check{T}) \setminus \sigma_{a.p.}(\check{T}) \subset \sigma(T),
  \end{equation*}
  where $\sigma_{a.p.}(\check{T})$ is the approximate point spectrum of $\check{T}$.
\end{proposition}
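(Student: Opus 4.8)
The plan is to exploit the intertwining identity~(\ref{eq5}) together with the fact that $\{\check{f} : f \in A\}$ is dense in $\check{A}$ by construction. First I would unpack what $\sigma(\check{T}) \setminus \sigma_{a.p.}(\check{T})$ means operator-theoretically. If $\lambda \in \sigma(\check{T})$ but $\lambda \notin \sigma_{a.p.}(\check{T})$, then $\lambda I - \check{T}$ is bounded below, hence injective and with closed range, yet it fails to be invertible. A bounded-below operator that is not invertible must have proper range, and since that range is closed it cannot be dense in $\check{A}$. This is the structural fact I would isolate at the outset: for $\lambda \in \sigma_r(\check{T})$ the range of $\lambda I - \check{T}$ is a closed, proper, and therefore non-dense, subspace of $\check{A}$.

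Next I would argue by contradiction, assuming $\lambda \notin \sigma(T)$, so that $\lambda I - T$ is invertible on $A$. For an arbitrary $f \in A$ put $g = (\lambda I - T)^{-1}f \in A$, so that $(\lambda I - T)g = f$. Applying the check operation and using~(\ref{eq5}) yields $(\lambda I - \check{T})\check{g} = \check{f}$. Since $g \in A$, the element $\check{g}$ lies in $\check{A}$, and therefore $\check{f}$ belongs to the range of $\lambda I - \check{T}$. As $f$ ranges over $A$, this shows that the range of $\lambda I - \check{T}$ contains the entire set $\{\check{f} : f \in A\}$.

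Finally, by the definition of $\check{A}$ as the closure of $\{\check{f} : f \in A\}$ in $C(\partial A)$, that set is dense, and hence the range of $\lambda I - \check{T}$ is dense in $\check{A}$. This contradicts the conclusion of the first paragraph, namely that the range is a closed proper subspace. Therefore $\lambda \in \sigma(T)$, which is exactly the inclusion $\sigma_r(\check{T}) \subset \sigma(T)$.

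I do not anticipate a serious obstacle here; the only points requiring care are confirming that $\check{g} \in \check{A}$ (immediate, since $g \in A$) and that the closed-range property genuinely follows from $\lambda \notin \sigma_{a.p.}(\check{T})$. The essential mechanism is that invertibility of $\lambda I - T$ forces the range of $\lambda I - \check{T}$ to be dense, which is incompatible with the residual-spectrum hypothesis that this same range is closed and proper.
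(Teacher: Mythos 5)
Your argument is correct, and it is a genuinely different and more elementary route than the paper's. The paper proves the inclusion by invoking Theorem 3.29 of \cite{Ki} to decompose $\partial A$ into sets $K_1$, $K_2$, $O$ and then, for a point $s \in O$, explicitly constructing a norm-convergent series of point evaluations along the $\varphi$-orbit of $s$ that yields an eigenfunctional $F_\gamma$ of $T'$ with $T'F_\gamma = \gamma F_\gamma$ for every $\gamma$ in a small annulus around $|\lambda|$; membership of $\lambda$ in $\sigma(T)$ then follows because such $\gamma$ can be taken arbitrarily close to $\lambda$. You instead use only two soft facts: the intertwining identity~(\ref{eq5}), which shows that invertibility of $\lambda I - T$ forces the range of $\lambda I - \check{T}$ to contain the dense set $\{\check{f} : f \in A\}$, and the standard characterization of $\sigma(\check{T}) \setminus \sigma_{a.p.}(\check{T})$ as the set of $\lambda$ for which $\lambda I - \check{T}$ has closed proper (hence non-dense) range. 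Your proof is shorter, needs no citation to the structure theorem, and also covers $\lambda = 0$, which the paper's proof explicitly sets aside. What the paper's construction buys in exchange is strictly more information: it exhibits a full neighborhood of eigenvalues of the adjoint $T'$ near $\lambda$, i.e.\ points of the point spectrum of $T'$ rather than mere membership in $\sigma(T)$, and this orbit-series technique is the template reused in the proof of Theorem~\ref{t6}. Both readings of the first equality in the statement (as a definition of $\sigma_r(\check{T})$) agree, so no gap arises there.
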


\begin{proof}
  Let $\lambda \in \sigma_r(\check{T}), \lambda \neq 0$. By Theorem 3.29 from~\cite{Ki} $\partial A$ is the union of three disjoint sets $K_1, K_2, O$ with the properties
\begin{enumerate} [(a)]
  \item The sets $K_1$ and $K_2$ are closed and $|\check{w}| > 0$ on $K_2$.  
  \item $\rho(\check{T}, C(K_1)) < |\lambda|$ and $\rho(\check{T}^{-1}, C(K_2)) < |\lambda|$.
  \item If $t \in O$ then all accumulation points of the sequence $\varphi^n(s), n \in\mathds{N}$ lay in $K_1$ and all accumulation points of the sequence $\varphi^{-n}(s), n \in \mathds{N}$ lay in $K_2$.
\end{enumerate}
Let $s \in O$. Conditions (a) - (c) guarantee that there is a positive $\varepsilon$ such that for any $\gamma$, $|\lambda| - \varepsilon < |\gamma| < |\lambda| + \varepsilon$ the series
$ \mu_\gamma =\sum \limits_{n=0}^\infty \gamma^{-n} \check{w}_n(s)\delta_{\varphi^n(s)}$
$+ \sum \limits_{n=1}^\infty \gamma^n \frac{\delta_{\varphi^{-n}(s)}}{\check{w}_n(\varphi^{-n}(s))}$ converges by norm in $C^\prime(\partial A)$. It is immediate to see that $\mu_\gamma$ defines a continuous functional $F_\gamma$ on $A$, that $T^\prime F_\gamma = \gamma F_\gamma$ and that there are a $\gamma \in D(\lambda, \varepsilon)$ and an $f \in A$ such that $F_\gamma f \neq 0$. Because $\varepsilon$ is arbitrary small we have $\lambda \in \sigma(T)$.
\end{proof}

\section{Some special cases}

While, as we have seen in the previous section, for a weighted automorphism $T$ of a semi-simple unital commutative Banach algebra
$\sigma(T)  \cap \sigma(\check{T}) \neq \emptyset$, neither of the inclusions $\sigma(T) \subset \sigma(\check{T})$ or $\sigma(\check{T}) \subset \sigma(T)$ are in general true.

Moreover, to the best of our knowledge, the following problem (see e.g.~\cite{GK}) remains unsolved

\begin{problem} \label{pr2} Let $K$ be a compact connected subset of the complex plane. Assume that $\mathds{T} \subset K$. Are there a commutative semi-simple Banach algebra $A$ and an automorphism $U$ of $A$ such that $\sigma(U) = K$?
\end{problem}
Nevertheless, in some special cases we can prove the equality $\sigma(T) = \sigma(\check{T})$. We will need the following lemma proved in~\cite[Theorem 1]{Ki1}. 

\begin{lemma} \label{l1}
  Let $A$ be a semi-simple commutative Banach algebra, $U$ be an automorphism of $A$, $w \in A$, and $T = wU$. Assume that
  \begin{enumerate}
    \item There is a $C>0$ such that for any $f,g \in A$     
    \begin{equation}\label{eq9}
      \|fg\|_A \leq C(\|f\|_A \|\check{g}\|_{\check{A}} + \|\check{f}\|_{\check{A}} \|g\|_A ),
    \end{equation}
    \item $\sigma(U) \subset \mathds{T}$.
  \end{enumerate}
  Then, $\rho(T) = \rho(\check{T})$, where $\rho(T)$ is the spectral radius of $T$. 
\end{lemma}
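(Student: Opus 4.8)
The plan is to realize both spectral radii as exponential growth rates of the iterated weights and then to match these rates, the matching being the only place where hypothesis (1) is used. Write $w_n=w\,U(w)\cdots U^{n-1}(w)$, so that $T^nf=w_n\,U^nf$ and $w_{m+n}=w_m\,U^m(w_n)$. Since $U'$ implements $\varphi$ one has $\widehat{U^kh}=\widehat h\circ\varphi^k$ on $\mathfrak{M}_A$, hence for $f\in A$ the restriction to $\partial A$ of the Gelfand transform of $T^nf$ equals $\check w_n\,(\check f\circ\varphi^n)$; thus $\check T^{\,n}\check f=\check w_n\,(\check f\circ\varphi^n)$ and, using $\|\check g\|_{\check A}=\rho(g)$ (the Shilov boundary is a boundary for $A$), $\|\check T^{\,n}\check f\|_{\check A}=\rho(T^nf)$. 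I abbreviate $a_n=\|w_n\|_A$ and $b_n=\rho(w_n)=\|\check w_n\|_{\check A}$. From $w_{m+n}=w_m U^m(w_n)$ one gets $b_{m+n}\le b_m b_n$, so $\beta:=\lim_n b_n^{1/n}$ exists with $b_n\ge\beta^n$, and plainly $b_n\le a_n$. Finally hypothesis (2) forces $\rho(U)=\rho(U^{-1})=1$, so $u_n:=\|U^n\|$ satisfies $u_n^{1/n}\to1$. (If $\beta=0$ the argument below degenerates and both radii vanish, so I assume $\beta>0$, whence $b_m,a_m>0$ for all $m$.)

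First I would compute $\rho(\check T)$, which needs neither hypothesis. The estimate $\|\check T^{\,n}g\|_{\check A}\le\|\check w_n\|_{\check A}\|g\|_{\check A}=b_n\|g\|_{\check A}$ gives $\|\check T^{\,n}\|\le b_n$, hence $\rho(\check T)\le\beta$. For the reverse inequality I test $\check T^{\,n}$ against the functions $\check w_{kn}\in\check A$, which are available precisely because they are Gelfand transforms of elements of $A$: from $\check w_n\,(\check w_{kn}\circ\varphi^n)=\check w_{(k+1)n}$ one has $\check T^{\,n}\check w_{kn}=\check w_{(k+1)n}$, so $\|\check T^{\,n}\|\ge b_{(k+1)n}/b_{kn}$. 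Multiplying these inequalities over $k=1,\dots,K$ telescopes to $\|\check T^{\,n}\|^{K}\ge b_{(K+1)n}/b_n$, and letting $K\to\infty$ yields $\|\check T^{\,n}\|\ge\beta^n$. Therefore $\rho(\check T)=\beta$.

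It remains to show $\rho(T)=\beta$, and here hypothesis (1) is essential. Applying (1) to the pair $f=w_n$, $g=U^n(w_n)$, whose product is $w_{2n}$, and using $\rho(U^n(w_n))=b_n$ together with $\|U^n(w_n)\|_A\le u_n a_n$, I obtain the doubling estimate $a_{2n}\le C\,a_n b_n\,(1+u_n)$. Fixing $\varepsilon>0$ and choosing $n_0$ so large that $b_n\le(\beta+\varepsilon)^n$ and $u_n\le(1+\varepsilon)^n$ for $n\ge n_0$, this becomes $a_{2n}\le 2C\,\delta^n a_n$ with $\delta=(\beta+\varepsilon)(1+\varepsilon)$. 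Iterating along the dyadic sequence $m_j=2^jn_0$ and then dividing the resulting logarithmic bound by $m_j$, the factor $2C$ contributes only $(2C)^{1/m_j}\to1$, so $\limsup_j a_{m_j}^{1/m_j}\le\delta$. Now I sandwich $\|T^{m_j}\|$ from above by $\|T^{m_j}\|\le a_{m_j}u_{m_j}$ and from below, using $T^{m_j}w_{m_j}=w_{2m_j}$ and $\rho\le\|\cdot\|_A$, by $\|T^{m_j}\|\ge\|T^{m_j}w_{m_j}\|_A/\|w_{m_j}\|_A\ge b_{2m_j}/a_{m_j}\ge\beta^{2m_j}/a_{m_j}$. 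Since $\rho(T)=\lim_m\|T^m\|^{1/m}$ may be evaluated along the subsequence $m_j$, the upper bound gives $\rho(T)\le\delta$ and the lower one $\rho(T)\ge\beta^2/(\delta+\varepsilon)$; letting $\varepsilon\to0$, whence $\delta\to\beta$, forces $\rho(T)=\beta=\rho(\check T)$.

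The step I expect to be the main obstacle is exactly this passage $a_n\sim b_n$, because the constant $C$ of hypothesis (1) accumulates under iteration. A naive one-step recursion $a_{n+1}\le C\rho(w)\,a_n+C\|w\|_A u_1 b_n$ only yields $\limsup_n a_n^{1/n}\le\max\{C\rho(w),\beta\}$, which is worthless once $C\rho(w)>\beta$. The device that rescues the argument is the \emph{geometric} (dyadic) iteration of the doubling estimate: there $C$ is raised only to a power equal to the number of steps $j\approx\log_2 m_j$, so that after extracting the $m_j$-th root its contribution vanishes, while the subexponential factor $u_n$ supplied by hypothesis (2) is absorbed into $\delta$. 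Verifying this dyadic bookkeeping, and confirming that the single good subsequence $m_j$ suffices because $\rho(T)$ is a genuine limit, is the technical heart of the proof.
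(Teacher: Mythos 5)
The paper does not actually prove Lemma~\ref{l1} --- it is quoted from \cite[Theorem 1]{Ki1} --- so there is no in-text argument to compare yours against; what matters is whether your self-contained proof stands on its own, and I believe it does. Your identification of both spectral radii with $\beta=\lim_n\rho(w_n)^{1/n}$ is correct: the submultiplicativity $b_{m+n}\le b_mb_n$ and the telescoping test vectors $\check w_{kn}$ give $\rho(\check T)=\beta$ without either hypothesis, and the doubling estimate $a_{2n}\le Ca_nb_n(1+u_n)$, obtained by applying~(\ref{eq9}) to $w_{2n}=w_n\,U^n(w_n)$ with $\rho(U^n(w_n))=b_n$, is exactly the right use of hypothesis (1); the dyadic iteration correctly confines the constant $C$ to a power $j\approx\log_2 m_j$ so that it disappears after taking $m_j$-th roots, and hypothesis (2) enters only through $\|U^n\|^{1/n}\to1$. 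The lower bound $\rho(T)\ge\beta^2/\delta$ via $T^{m_j}w_{m_j}=w_{2m_j}$ closes the sandwich, and evaluating $\rho(T)$ along the single subsequence $m_j$ is legitimate because the spectral radius formula is a genuine limit. Two small points deserve a sentence each in a final write-up: the case $\beta=0$ is asserted rather than argued, though it is immediate that the upper-bound half of your dyadic estimate still runs with $\beta+\varepsilon$ replaced by $\varepsilon$ and forces $\rho(T)=0$; and when $A$ is unital (the standing convention of the paper) the lower bound $\rho(T)\ge\beta$ follows at once from $\|T^n\|\ge\|T^n\mathds{1}\|/\|\mathds{1}\|=a_n/\|\mathds{1}\|\ge\beta^n/\|\mathds{1}\|$, which would let you drop the $\beta^2/\delta$ detour. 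Your closing diagnosis --- that the naive one-step recursion is worthless because the constant accumulates exponentially, and that the geometric doubling is what rescues the argument --- is exactly the point of the lemma and is consistent with the fact that the paper must \emph{postulate} the stronger $p$-fold inequality~(\ref{eq10}) separately rather than derive it from~(\ref{eq9}).
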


In the sequel we will need a stronger version of inequality~(\ref{eq9}): there is a $C>0$ such that
\begin{equation}\label{eq10}
  \|f_1 \ldots f_p\| \leq C \sum \limits_{j=1}^p \|f_j\|_A \| \prod \limits_{k \neq j}\check{f_k}\|_{\check{A}}, p \in \mathds{N},
  f_j \in A, j = 1,\ldots,p.
\end{equation}

\begin{theorem} \label{t6}
  Let $A$ be a unital regular semi-simple commutative Banach algebra. Assume~(\ref{eq10}) and assume that every point of $\partial A$ is a peak point for the algebra $\check{A}$. Let $U$ be an automorphism of $A$ such that the set of all $\varphi$-periodic points is of first category in $\partial A$ and $\sigma(U) = \mathds{T}$. Let $w \in A^{-1}$ and $T = wU$. Then,
    \begin{equation*}
    \sigma(T) = \sigma(\check{T}).
  \end{equation*}
\end{theorem}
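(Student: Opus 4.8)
The plan is to establish the two inclusions $\sigma(\check{T}) \subseteq \sigma(T)$ and $\sigma(T) \subseteq \sigma(\check{T})$ separately, the second being the substantial one. First I record the standing reductions. Putting $p=2$ in~\eqref{eq10} recovers~\eqref{eq9}, and $\sigma(U)=\mathds{T}\subset\mathds{T}$, so Lemma~\ref{l1} applies to $T$. Since $w\in A^{-1}$, the operator $T^{-1}=U^{-1}w^{-1}$ is again a weighted automorphism $w'U^{-1}$ with $w'\in A^{-1}$ and $\sigma(U^{-1})=\mathds{T}$, so Lemma~\ref{l1} applies to $T^{-1}$ as well; hence $\rho(T)=\rho(\check{T})$ and $\rho(T^{-1})=\rho(\check{T}^{-1})$, placing $\sigma(T)$ and $\sigma(\check{T})$ in the same closed annulus. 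I also note that $f\mapsto\check{f}$ is an injective unital algebra homomorphism of $A$ into $\check{A}$: if $\check{f}=0$ then $\hat{f}$ vanishes on the Shilov boundary, so $\|\hat{f}\|_\infty=0$ and $f=0$ by semisimplicity.

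For $\sigma(\check{T})\subseteq\sigma(T)$ I fix $\lambda\in\sigma(\check{T})$ and examine how the circle $\lambda\mathds{T}$ meets $\sigma(T)$, via Theorem~\ref{t5}. If $\lambda\mathds{T}\subseteq\sigma(T)$ there is nothing to prove. If $\lambda\mathds{T}\cap\sigma(T)\neq\emptyset$ but $\lambda\mathds{T}\not\subseteq\sigma(T)$, then Theorem~\ref{t5}$(2)$ (whose hypothesis, first category of the periodic points, is exactly what we assume) yields a $\varphi$-periodic $s$ of least period $p$ with $\check{w}_p(s)=\lambda^p$; the periodic-orbit functional $\sum_{j=0}^{p-1}\lambda^{-j}\check{w}_j(s)\delta_{\varphi^j(s)}$, transported to $A$ by $f\mapsto\check{f}$, is then a nonzero $\lambda$-eigenvector of $T'$, so $\lambda\in\sigma(T)$. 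Finally, if $\lambda\mathds{T}\cap\sigma(T)=\emptyset$, Theorem~\ref{t5}$(1)$ splits $\partial A=K_1\sqcup K_2\sqcup O$; since splitting $\check{A}$ by the idempotent $\mathds{1}_{K_1}$, exactly as in the next paragraph, would force $\lambda\notin\sigma(\check{T})$ when $O=\emptyset$, here $O$ is a nonempty open set, hence non-meager, and as the periodic points are of first category it contains a non-periodic $s$. Condition (iii) of Theorem~\ref{t5}$(1)$ together with the spectral-radius gaps in (ii) furnishes the inequalities~\eqref{eq8} for this $s$, whence Proposition~\ref{p1} gives $\lambda\mathds{T}\subseteq\sigma(T)$, contradicting the case assumption. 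Thus $\lambda\in\sigma(T)$ in all cases.

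The heart of the proof is the reverse inclusion. I fix $\mu\notin\sigma(\check{T})$ and reduce $\mu\notin\sigma(T)$ to Lemma~\ref{l1} through an idempotent splitting of $A$. Because $\mu$ lies in the resolvent set of $\check{T}$, Theorem 3.29 of~\cite{Ki} splits $\partial A$ into two clopen $\varphi$-invariant sets $K_1,K_2$ (the wandering part $O$ being empty for a resolvent point) with $\rho(\check{T},C(K_1))<|\mu|$ and $\rho(\check{T}^{-1},C(K_2))<|\mu|^{-1}$. Regularity of $A$ provides $e\in A$ with $\hat{e}=1$ on $K_1$ and $\hat{e}=0$ on $K_2$, so that $\check{e}=\mathds{1}_{K_1}$; injectivity of $f\mapsto\check{f}$ then forces $e^2=e$, and $\varphi$-invariance of $K_1$ forces $Ue=e$. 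Consequently $A=eA\oplus(1-e)A$ is a $T$-invariant decomposition into unital semisimple subalgebras inheriting~\eqref{eq10}, on which $U$ restricts to automorphisms with spectrum in $\mathds{T}$. Writing $T_1=T|eA$ and $T_2=T|(1-e)A$ and applying Lemma~\ref{l1} to $T_1$ and to $T_2^{-1}$ gives $\rho(T_1)=\rho(\check{T}_1)\leq\rho(\check{T},C(K_1))<|\mu|$ and $\rho(T_2^{-1})=\rho(\check{T}_2^{-1})\leq\rho(\check{T}^{-1},C(K_2))<|\mu|^{-1}$. Hence $\sigma(T_1)\subset\{|z|<|\mu|\}$ and $\sigma(T_2)\subset\{|z|>|\mu|\}$, so $\mu\notin\sigma(T_1)\cup\sigma(T_2)=\sigma(T)$, as required.

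I expect the main obstacles to be the two appeals to the structure theory of~\cite{Ki}: extracting from Theorem 3.29 the clopen $\varphi$-invariant splitting with empty wandering part at a resolvent point $\mu$ (used in the reverse inclusion), and verifying that points of the open set $O$ genuinely satisfy~\eqref{eq8} (used in the forward inclusion). The decisive conceptual step, however, is the passage from a mere Gelfand-level separating function to an honest idempotent $e\in A$: this is exactly where regularity and the injectivity of $f\mapsto\check{f}$ combine, and it is what lets Lemma~\ref{l1}, applied on each summand, convert the modulus gap of $\check{T}$ across $\partial A$ into a true gap in $\sigma(T)$.
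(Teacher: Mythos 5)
Your reverse inclusion is essentially the paper's own argument: for $\mu\notin\sigma(\check T)$ one splits $\partial A=\mathfrak{M}_A$ into two closed $\varphi$-invariant sets with a spectral-radius gap (the paper cites Theorems 3.7 and 3.10 of \cite{Ki} rather than 3.29), produces the idempotent $e$ (the paper via Shilov's idempotent theorem, you via regularity and normality --- both work here), and applies Lemma~\ref{l1} to each summand. That half is correct.

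The forward inclusion is where you diverge from the paper, and it contains a genuine gap. In your case $\lambda\mathds{T}\cap\sigma(T)=\emptyset$ you take a non-periodic $s$ in the nonempty open set $O$ and assert that conditions (ii)--(iii) of Theorem~\ref{t5}(1) ``furnish the inequalities~\eqref{eq8}.'' They do not: (iii) only says that the forward and backward orbits of $s$ eventually accumulate in $K_2$ and $K_1$, and (ii) only bounds the asymptotic growth of the weight products there; together these give something like $|\check{w}_n(s)|\leq C(|\lambda|-\varepsilon)^{-n}$ control for large $n$, but nothing prevents $|\check{w}_n(s)|>|\lambda|^n$ for finitely many $n$, whereas Proposition~\ref{p1} demands the exact inequalities for \emph{every} $n\in\mathds{N}$. (The point satisfying~\eqref{eq8} exactly is produced by Lemma 3.6 of \cite{Ki} from $\lambda\in\sigma_{a.p.}$, not from membership in $O$.) The paper closes precisely this case by a different device: it splits $\sigma(\check T)$ into approximate point and residual spectrum, and for $\lambda$ in the residual spectrum it uses regularity to choose $f\neq 0$ supported in a wandering open subset $V$ of $O$ and shows, using inequality~\eqref{eq10} together with auxiliary functions $u,v$ that localize $\check w$ away from $K_2$ and $K_1$, that $g=\sum_{n\in\mathds{Z}}\lambda^{-n}T^nf$ converges in $A$ and satisfies $Tg=\lambda g$. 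Note that your forward inclusion never invokes~\eqref{eq10} or regularity; that is a warning sign, since those hypotheses are exactly what the paper consumes at this step. Your case of $\lambda\mathds{T}\cap\sigma(T)\neq\emptyset$, $\lambda\mathds{T}\not\subseteq\sigma(T)$ --- the periodic-orbit functional $\sum_{j=0}^{p-1}\lambda^{-j}\check{w}_j(s)\delta_{\varphi^j(s)}$, which is indeed a $\lambda$-eigenvector of $T^\prime$ and is nonzero by linear independence of characters --- is correct and is a tidy alternative to the paper's treatment of that situation, but it does not repair the missing residual-spectrum case.
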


\begin{proof}
  (1) $\sigma(\check{T}) \subset \sigma(T)$. Let $S = \check{w}\check{U}^{-1}$. It follows from~\cite[Theorem 4.2]{Ki} and the equality $\mathfrak{M}_A = \partial A$ that $\sigma(\check{T}) = \sigma(S)$. If $\lambda \in \sigma_{a.p.}(S)$ then by Proposition~\ref{p1} $\lambda \in \sigma(T)$. If $\lambda \in \sigma_r(S)$ then by Theorem 3.29 in~\cite{Ki} $\partial A$ is the union of sets $K_1$, $K_2$, $O$ with properties (i) - (iii) from the statement of Theorem~\ref{t5}. Let $V$ be a nonempty open subset of $O$ such that the sets $cl(\varphi^n(V)), n \in \mathds{Z}$ are pairwise disjoint and let $f \in A$ be such that $f \neq 0$ and $supp f \subset V$. We claim that the series $g= \sum \limits_{n=-\infty}^\infty \lambda^{-n}T^nf$ converges by norm in $A$. This claim will follow as soon as we prove that there are $\gamma, 0 < \gamma < 1$, and $N \in \mathds{N}$ such that for any $n$ satisfying $|n|> N$ we have
   \begin{equation} \label{eq11}
    \|\lambda^{-n} T^nf\| \leq |n|\gamma^n \|w\| \|w^{-1}\| \|U^{-n}\|\|f\|.
  \end{equation}
  
  Let us consider the case $n \geq 0$. There is a $u \in A$ such that $\check{u} \equiv 0$ on some open neighborhood of the set $K_2$ and $\check{u} \equiv \check{w}$ on $\bigcup \limits_{n=0}^\infty \varphi^{-n}(V)$. Notice that $T^n f= u_nU^nf, n \geq 0$.
  It follows from properties (i) - (iii) and inequality~(\ref{eq10}) that there is an $N \in \mathds{N}$ such that for any $n \geq N$ inequality~(\ref{eq11}) is satisfied.
  
  Similarly, to prove~(\ref{eq11}) in case $n < 0$ we consider $v \in A$ such that $\check{v} \equiv 0$ on some open neighborhood of the set $K_1$ and $\check{v} \equiv \check{w}$ on $\bigcup \limits_{n=1}^\infty \varphi^n(V)$. Thus, $g \in A$ and clearly, $Tg = \lambda g$.
  
  (2) $\sigma(T) \subset \sigma(\check{T})$. Let $\lambda \in \mathds{C} \setminus \sigma(\check{T})$. By Lemma~\ref{l1} we can without loss of  generality assume that $1/\rho(\check{T}^{-1}) < |\lambda| < \rho(\check{T})$. It follows from Theorems 3.7 and 3.10 in~\cite{Ki} that $\mathfrak{M}_A = \partial A$ is the union of two disjoint closed $\varphi$-invariant subsets $K_1$ and $K_2$ such that $\rho(\check{T}, C(K_1)) < |\lambda|$ and $\rho(\check{T}^{-1}, C(K_2)) < |\lambda|$. It follows from Shilov's Idempotent Theorem and Lemma~\ref{l1} that $\lambda \not \in \sigma(T)$.
    \end{proof}
    
  The condition that every point in $\partial A$ is a peak point for
  $\check{A}$ is rather restrictive. In the next theorem we get rid of it but at the price of introducing some additional conditions.

  \begin{theorem} \label{t7}
   Let $A$ be a unital regular semi-simple commutative Banach algebra. 
   Assume the following "local" version of inequality~(\ref{eq10}), if
   $f_1, \ldots, f_p \in A$ and $supp(f_1f_2\ldots f_p) \subset V$, where $V$ is an open subset of $\partial A$ then
   \begin{equation}\label{eq12}
  \|f_1 \ldots f_p\| \leq C \sum \limits_{j=1}^p \|f_j\|_A \| \prod \limits_{k \neq j}\check{f_k}\|_{C(clV)},
   \end{equation}
   where the constant $C$ does not depend on $V$, $p$, and $f_1, \ldots, f_p$. Assume additionally that if $f,g \in A$ and $supp(\check{f}) \cap supp(\check{g}) = \emptyset$ then
      \begin{equation}\label{eq13}
     \|f+g\|_A = \|f-g\|_A.
   \end{equation}
 Let $U$ be an automorphism of $A$ such that the set of all $\varphi$-periodic points is of first category in $\partial A$ and $\sigma(U) = \mathds{T}$. Let $w \in A^{-1}$ and $T = wU$. Then
    \begin{equation*}
    \sigma(T) = \sigma(\check{T}).
  \end{equation*}
    \end{theorem}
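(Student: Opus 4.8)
The plan is to follow the two‑part scheme of the proof of Theorem~\ref{t6}, observing first that the present hypotheses are strong enough to recover everything except the single place where peak points were essential, and then to supply a new argument for that place using~(\ref{eq12}) and~(\ref{eq13}). Note that~(\ref{eq12}) implies~(\ref{eq10}): taking $V=\partial A$ the support condition is vacuous and $C(\mathrm{cl}\,V)=\check A$, so~(\ref{eq12}) reduces to~(\ref{eq10}) and a fortiori to~(\ref{eq9}). Consequently Lemma~\ref{l1} applies and $\rho(T)=\rho(\check T)$, while regularity of $A$ gives $\mathfrak M_A=\partial A$. With these in hand the inclusion $\sigma(T)\subset\sigma(\check T)$ is proved exactly as in part (2) of Theorem~\ref{t6}: for $\lambda\notin\sigma(\check T)$ one may assume $1/\rho(\check T^{-1})<|\lambda|<\rho(\check T)$, decompose $\partial A=K_1\cup K_2$ into disjoint closed $\varphi$‑invariant sets by Theorems 3.7 and 3.10 in~\cite{Ki}, and conclude $\lambda\notin\sigma(T)$ by Shilov's Idempotent Theorem together with Lemma~\ref{l1}. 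None of this uses peak points.

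For the reverse inclusion I would split $\sigma(\check T)=\sigma_{a.p.}(\check T)\cup\sigma_r(\check T)$. The residual part is already free of peak points, since $\sigma_r(\check T)\subset\sigma(T)$ is exactly Proposition~\ref{p2} (alternatively one repeats the eigenvector construction of part (1) of Theorem~\ref{t6}, which uses only~(\ref{eq10})$\subset$(\ref{eq12}), regularity, and Theorem 3.29 in~\cite{Ki}). Thus the entire novelty is to show $\sigma_{a.p.}(\check T)\subset\sigma(T)$ \emph{without} the peak point hypothesis, that is, to replace the role played by Proposition~\ref{p1} in Theorem~\ref{t6}.

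For this, fix $\lambda\in\sigma_{a.p.}(\check T)$. Using Lemma 3.6 and Theorem 3.29 in~\cite{Ki} I would extract a non‑$\varphi$‑periodic point $s\in\partial A$ together with weight estimates of the type~(\ref{eq8}) governing the behaviour of $\check w_n$ along the forward and backward orbits of $s$; the assumption that the $\varphi$‑periodic points form a set of first category guarantees, via the Baire category theorem and the structure results in~\cite{Ki}, that $s$ may be chosen with a wandering open neighbourhood $V$, so that the closures $\mathrm{cl}\,\varphi^{-n}(V)$, $n\in\mathds Z$, are pairwise disjoint. By regularity choose $f\in A$, $f\neq 0$, with $\mathrm{supp}\,\check f\subset V$ and form the series $\sum_n\lambda^{-n}T^n f$ (truncated or one‑sided in the approximate case, two‑sided when both tails converge). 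Since $\check{(T^n f)}=\check w_n\,(\check f\circ\varphi^{n})$ is supported in $\varphi^{-n}(V)$, the summands are disjointly supported. I would then use the local inequality~(\ref{eq12}), applied on neighbourhoods of $\varphi^{-n}(V)$, to estimate each $\|\lambda^{-n}T^n f\|_A$ through the \emph{local} sup‑norms $\|\check w_n\|_{C(\mathrm{cl}\,\varphi^{-n}(V))}$, which by the orbit estimates decay geometrically relative to $|\lambda|^{n}$ (a decay the global estimate~(\ref{eq10}) cannot see, and where $\sigma(U)=\mathds T$ controls the accompanying factor $\|U^{-n}\|$); and I would use~(\ref{eq13}) to obtain from the disjointness of supports the $1$‑unconditional lower bound $\|\sum_n\lambda^{-n}T^nf\|_A\ge\|f\|_A>0$, applied to the pair $f$ and the sum of the remaining terms. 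Together these produce a nonzero (approximate) eigenvector of $T$ at $\lambda$, whence $\lambda\in\sigma(T)$.

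The main obstacle is precisely this approximate‑point‑spectrum step. Without peak points one can no longer probe the resolvent at a single point as in Proposition~\ref{p1}; instead an approximate eigenvector of $\check T$ on $C(\partial A)$ must be transported back into $A$, where the norm is genuinely larger. Conditions~(\ref{eq12}) and~(\ref{eq13}) are tailored to restore the lost control: (\ref{eq12}) yields submultiplicative estimates sensitive to the decay of the weight along a wandering orbit, and~(\ref{eq13}) yields two‑sided norm control for disjointly supported sums, so that the transported pieces neither blow up nor collapse. The delicate point is to arrange a single orbit of $s$ along which the upper estimate from~(\ref{eq12}) (giving convergence) and the lower estimate from~(\ref{eq13}) (giving nondegeneracy) hold simultaneously; the first‑category hypothesis on the periodic points is exactly what secures the wandering orbit that makes the supports disjoint and hence~(\ref{eq13}) applicable.
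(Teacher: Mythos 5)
Your overall architecture matches the paper's: reduce to showing $\sigma_{a.p.}(\check T)\subset\sigma(T)$ (the residual part via Proposition~\ref{p2}, the inclusion $\sigma(T)\subset\sigma(\check T)$ via part (2) of Theorem~\ref{t6}), then transport an approximate eigenvector into $A$ using disjointly supported iterates of a function, with~(\ref{eq12}) supplying the upper bounds and~(\ref{eq13}) the lower bound. But the step you single out as the crux is exactly where your execution breaks down, in two ways. First, the claim that the first-category hypothesis on the $\varphi$-periodic points yields a point $s$ with a \emph{wandering} neighbourhood $V$ (all $\mathrm{cl}\,\varphi^{n}(V)$, $n\in\mathds Z$, pairwise disjoint) is false: an irrational rotation of the circle has no periodic points at all, yet no wandering points either. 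What Baire category actually gives, and what the paper uses, is weaker: for each fixed $n$ an open set $V_n$ whose closures $\varphi^{i}(\mathrm{cl}\,V_n)$ are pairwise disjoint only for $-n-1\le i<j\le n+1$, together with the two-sided weight bounds $|w_i(t)|\ge 1/2$, $|w_i(\varphi^{-i}(t))|\le 2$ on $V_n$ coming from Lemma 3.6 in~\cite{Ki}.

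Second, even granting a wandering neighbourhood, for $\lambda\in\sigma_{a.p.}(\check T)$ the orbit estimates of type~(\ref{eq8}) are inequalities in the \emph{wrong} direction for summability: they say $|\check w_n(s)|\le|\lambda|^n$ forward and $\ge|\lambda|^n$ backward, so neither tail of $\sum_n\lambda^{-n}T^nf$ decays geometrically and the series has no reason to converge. (Genuine geometric decay on both tails is precisely the residual-spectrum situation of Theorem 3.29, which is why the convergent two-sided series appears in part (1) of Theorem~\ref{t6} but cannot be reused here.) The actual content of the theorem is the replacement of that series by the finite damped sums $g_n=\sum_{j=-n-1}^{n}(1-\varepsilon_n)^{|j|}T^jf_n$ with $\mathrm{supp}\,\check f_n\subset V_n$, where~(\ref{eq13}) gives $\|g_n\|\ge 1/2$, the telescoping identity for $g_n-Tg_n$ leaves only the two boundary terms plus an $O(\varepsilon_n)\|g_n\|$ remainder, and the damping parameters $\varepsilon_n$ are calibrated against $\|U^{\pm n}\|=e^{\delta_n n}$ with $\delta_n\to 0$ (this is where $\sigma(U)=\mathds T$ is used) so that $\|g_n-Tg_n\|\to 0$. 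Your proposal gestures at ``truncated in the approximate case'' but supplies neither the damping nor the boundary-term estimates, and these are the whole proof.
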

      
    \begin{proof}
  The inclusion $\sigma(T) \subset \sigma(\check{T})$ follows from the reasoning in the proof of Theorem~\ref{t6}. By Proposition~\ref{p2} it suffices to prove that if $\lambda \in \sigma_{a.p.}(\check{T})$ then $\lambda \in \sigma(T)$. Without loss of generality we can assume that $\lambda = 1$. It follows from Lemma 3.6 in~\cite{Ki} and the condition that the set of $\varphi$-periodic points is of first category in $\partial A$ that for every $n \in \mathds{N}$ we can find an open subset $V_n$ of $\partial A$ with the properties
    \begin{equation} \label{eq14}
    \varphi^i(cl V_n) \cap \varphi^j(cl V_n) = \emptyset, -n-1 \leq i < j \leq n+1.
  \end{equation}
  and
    \begin{equation} \label{eq15}
    |w_i(t)| \geq 1/2, |w_i(\varphi^{-i}(t)| \leq 2, 0 \leq i \leq n+1, t \in V_n.
  \end{equation}
  Let $f_n \in A$ be such that $\|f_n\|_A = 1$ and $supp \check{f_n} \subset V_n$. Let
  
  \begin{equation}\label{eq16}
    g_n = \sum \limits_{j=-n-1}^n (1-\varepsilon_n)^{|j|}T^jf_n,
  \end{equation}
  where the positive numbers $\varepsilon_n, n \in \mathds{N}$, will be chosen later. Notice that in virtue of~(\ref{eq13}) $\|g_n\| \geq 1/2$. From~(\ref{eq16}) we get
  
  \begin{equation}\label{eq17}
  \begin{split}
   & g_n - Tg_n = (1-\varepsilon_n)^{-n-1}T^{-n-1}f_n \\
    & + \varepsilon_n \sum \limits_{j=-n}^0 T^jf_n - \varepsilon_n \sum \limits_{j=1}^n T^jf_n + (1-\varepsilon_n)^n T^{n+1}
    \end{split}
  \end{equation}
  It follows from~(\ref{eq12}), (\ref{eq13}), ~(\ref{eq14}), and~(\ref{eq17}) that
  
  \begin{equation}\label{eq18}
  \begin{split}
    & \|g_n - Tg_n\| \leq \varepsilon_n \|g_n\| \\ +[(1-\varepsilon_n)^{n+1} 
    & + \varepsilon_n]\|T^{-n-1}f_n\| +(1-\varepsilon_n)^n\|T^nf_n\|.
    \end{split}
  \end{equation}
In virtue of~\ref{eq12}) and~(\ref{eq15} we have
 
\begin{equation}\label{eq19}
\begin{split}
 & \|T^nf_n\|\leq n\|w\|\|w^{-1}\|\|U^n\| \; \text{and} \\
& \|T^{-n-1}f_n\|\leq (n+1)\|w\|\|w^{-1}\|\|U^{-n-1}\|.
\end{split}
\end{equation}
  Next we will define the numbers $\varepsilon_n$. Notice that 
$\|U^{|n|}\| = e^{\delta_n n}$, where $\limsup \delta_n = 0$. Let
  
  \begin{equation}\label{eq20}
   \varepsilon_n = \left\{
      \begin{array}{ll}
       2\delta_n , & \hbox{if $\limsup n\delta_n = \infty$;} \\
       1/\sqrt{n} , & \hbox{otherwise.}
      \end{array}
    \right.
  \end{equation}
It follows from~(\ref{eq18}), ~(\ref{eq19}), and~(\ref{eq20}) that $\|g_n - Tg_n\| \rightarrow 0$. 
    \end{proof}

\begin{remark} \label{r2}
  It follows from the proof of Theorem~\ref{t7} that 
$\sigma_{a.p.}(\check{T}) \subset \sigma_{a.p.}(T)$. Moreover, if we assume additionally that $\partial A$ is metrizable and contains no isolated points, then
$\sigma_{a.p.}(\check{T}) \subset \sigma_{usf}(T)$, where $\sigma_{usf}(T)$ denotes the upper semi-Fredholm spectrum of $T$.
\end{remark}

\section{Examples}

\begin{example} \label{ex1}
  Let $K$ be a compact metric space with the metric $d$ and $\varphi$ be an isometry of $K$ onto itself. Let $0 < \alpha \leq 1$ and
\begin{equation}\label{eq21}
\begin{split}
  & A = Lip_\alpha(K) = \{f \in C(K): \|f\| = \|f\|_{C(K)} \\
& +\sup \limits_{x,y \in K, x \neq y} \frac{|f(x)-f(y)|}{[d(x,y)]^\alpha} < \infty\}.
\end{split}
\end{equation}
 Assume that the set of all $\varphi$-periodic points is of first category in $K$. Let $w$ be an invertible element of the Banach algebra
$A=Lip_\alpha(K)$ and $Tf(k)=w(k)f(\varphi(k)),f \in A, k \in K$. By Theorem~\ref{t6} $\sigma(T,A) = \sigma(T,C(K))$. In particular, if $K$ is connected, then $\sigma(T,A)$ is an annulus or a circle centered at $0$.
\end{example}

Example~\ref{ex1} can be generalized as follows

\begin{example} \label{ex2}
 Let $K$ be a compact metric space with the metric $d$ and $\varphi$ be a continuous map of $K$ onto itself. Assume that 
$K = \bigcup \limits_{j=1}^n K_n$ where $K_j$ are clopen $\varphi$-invariant subsets of $K$ and that the restriction of $\varphi$ on each $K_i$ is an isometry of $K_i$ onto itself. Assume that the set of all $\varphi$-periodic points is of first category in $K$. Let $w$ be an invertible element of the Banach algebra
$A=Lip_\alpha(K)$ and $Tf(k)=w(k)f(\varphi(k)),f \in A, k \in K$. Then
$\sigma(T,A) = \sigma(T,C(K))$.
\end{example}
\begin{proof}
  Let $0 < \varepsilon < \min \limits_{1 \leq i < j \leq n} d(K_i,K_j)$. By~\cite[Theorem 2.1.6, p.101]{CMN} the norm on $A$ defined as
\begin{equation}\label{eq22}
    \|f\| = \|f\|_{C(K)}  +\sup \limits_{x,y \in K, 0 < d(x,y) \leq \varepsilon} \frac{|f(x)-f(y)|}{[d(x,y)]^\alpha}
\end{equation}
is equivalent to the norm~(\ref{eq21}). It remains to apply Theorem~\ref{t6}.
\end{proof}

\begin{example} \label{ex3}
 Let $A$ be the Banach algebra of all complex-valued continuously differentiable functions on the unit disc $\mathds{D}$ with the norm
\begin{equation*}
  \|f\| = \|f\|_\infty + \|\partial f / \partial x\|_\infty +\|\partial f / \partial y\|_\infty.
\end{equation*}
Let $\varphi$ be a non-periodic elliptic Möbius transformation of $\mathds{D}$, $Uf = f \circ \varphi, f \in A$, $w \in A^{-1}$, and 
$T=wU$. Because the automorphism $U$ is similar to the automorphism generated by a non periodic rotation of $\mathds{D}$ there is a $C > 0$ such that $\|U^n\| \leq C, n \in \mathds{Z}$, and therefore by Theorem~\ref{t6}
$\sigma(T,A) = \sigma(T,C(\mathds{D}))$. In particular, $\sigma(T,A)$ is the annulus with the radii
\begin{equation*}
\begin{split}
 & r= \min \limits_{\mu \in M_\varphi} \int \ln |w| d\mu, \\
&  R= \max \limits_{\mu \in M_\varphi} \int \ln |w| d\mu,
\end{split}
\end{equation*}
where $M_\varphi$ is the set of all $\varphi$-invariant probability Radon measures on $\mathds{D}.$ 
\end{example}

Example~\ref{ex3} can be generalized as follows

\begin{example} \label{ex4}
  Let $u$ be an orthogonal map of $\mathds{R}^n$ onto itself such that $u^n \neq I, n \in \mathds{N}$. Let $K$ be a compact subset of $\mathds{R}^n$ such that $uK = K$ and let $A$ be the closure of complex-valued polynomials in the norm $\|P\| = \|P\|_{C(K)} + \sum \limits_{k=1}^n \|\partial P/\partial x_k\|_{C(K)}$. Let $w \in A^{-1}$ and $(Tf)(x) = w(x)f(ux), f \in A, x \in K$. Then (see e.g.~\cite[p55]{Lo}) $\mathfrak{M}_A = \partial A = K$ and it is not difficult to see that every point of $K$ is a peak point of $A$. By Theorem~\ref{t6} $\sigma(T,A) = \sigma(T,C(K))$. In particular, $\sigma(T,A)$ is rotation invariant, and, if $K$ is connected, $\sigma(T)$ is an annulus or a circle centered at $0$.
\end{example}

\begin{example} \label{ex5}
  Let $A$ be the Wiener algebra of all the functions on $\mathds{T}$ with absolutely convergent Fourier series.
  Let $w \in A^{-1}$ and let $(Tf)(z) = w(z)f(\alpha z), f \in A, z \in \mathds{T}$, where $\alpha$ is not a root of unity. It follows from Theorem~\ref{t3} that $\sigma(T) \supseteq c\mathds{T}$, where
  $c = \exp \frac{1}{2\pi} \int \limits_0^{2\pi} \ln |w(e^{i\theta})|d\theta$. Moreover, it follows from Theorem~\ref{t6} that if $w$ satisfies the well known condition
\begin{equation*}
  \int \limits_0^1 \frac{\omega(\delta)}{\delta^{3/2}} < \infty,
\end{equation*}
where $\omega$ is the modulus of continuity of $w$, or if
 $w \in BV \cap Lip_\alpha(\mathds{T})$, where $0 <\alpha \leq 1$, then $\sigma(T) = c\mathds{T}$.
\end{example}

\begin{remark} \label{r3}
  We do not know whether the equality $\sigma(T) = c\mathds{T}$ is valid for an arbitrary weight $w \in A^{-1}$, or even if $\sigma(T)$ is rotation invariant.
\end{remark}

\end{document}